\theoremstyle{plain}
\newtheorem{thm}{\protect\theoremname}
\theoremstyle{plain}
\newtheorem{prop}[thm]{\protect\propositionname}
\theoremstyle{plain}
\newtheorem{lem}[thm]{\protect\lemmaname}
\theoremstyle{remark}
\newtheorem{rem}[thm]{\protect\remarkname}
\newcommand{\1}{\mbox{1\hspace{-1mm}I}}
\numberwithin{equation}{section}
  \providecommand{\lemmaname}{Lemma}
  \providecommand{\propositionname}{Proposition}
  \providecommand{\remarkname}{Remark}
\providecommand{\theoremname}{Theorem}
\providecommand{\lemmaname}{Lemma}
\providecommand{\propositionname}{Proposition}
\providecommand{\remarkname}{Remark}
\providecommand{\theoremname}{Theorem}
\begin{document}
\selectlanguage{american}%
\global\long\def\1{\mbox{1\hspace{-1mm}I}}%
 
\title{IDENTIFICATION OF LINEAR DYNAMICAL SYSTEMS AND MACHINE LEARNING\\
 dedicated to Umberto Mosco, for his 80th birthday }
\author{Alain Bensoussan\\
 International Center for Decision and Risk Analysis\\
 Jindal School of Management, University of Texas at Dallas\thanks{also with the School of Data Science, City University Hong Kong. Research
supported by the National Science Foundation under grants DMS-1612880,
DMS-1905449 and grant from the SAR Hong Kong RGC GRF 11303316. } \\
 Fatih Gelir\\
 Department of Mathematics, University of Texas at Dallas\\
 Viswanath Ramakrishna\\
 Department of Mathematics, University of Texas at Dallas\\
 \thanks{Minh-Binh Tran is partially supported by NSF Grant DMS-1854453, SMU
URC Grant 2020, SMU DCII Research Cluster Grant, Dedman College Linking
Fellowship, Alexander von Humboldt Fellowship.}Minh-Binh Tran\\
 Department of Mathematics, Southern Methodist University }
\maketitle
\selectlanguage{english}%

\section{INTRODUCTION }

The topic of identification of dynamic systems, has been at the core
of modern control , following the fundamental works of Kalman. A good
state of the art for linear dynamic systems can be found in the references
\cite{BRO}, \cite{KFA}, see also \cite{ABE} and \cite{DUP}. Realization
Theory has been one of the major outcomes in this domain, with the
possibility of identifying a dynamic system from an input-output relationship.
The recent development of machine learning concepts has rejuvanated
interest for identification. In this paper, we review briefly the
results of realization theory, and develop some methods inspired by
Machine Learning concepts. We have been inspired by papers \cite{GPI},
\cite{SVV} and \cite{Vidal}.

The interaction between system-control theory and signal processing
on the one hand and machine learning and more generally data science
on the other hand has been steadily increasing in recent years. Given
that all these disciplines may be viewed as part of the activity of
solving inverse problems, this interaction is both inevitable and
inexorable. The papers \cite{GPI,SVV,Vidal} provide compelling instances
of this interaction The paper \cite{GPI} argues this interplay persuasively.
Similarly, \cite{Vidal} discusses the naturality and intervention
of stochastic control and Hamilton-Jacobi theory in the Entropy-Stochastic
Gradient Descent in the study of deep neural networks, amongst many
more such examples in deep learning. \cite{SVV} provides a unified
approach for kernels on dynamical systems used in machine learning
inspired by the behavioural framework in system theory.

\section{REALIZATION THEORY}

\subsection{BASIC PROBLEM}

The basic problem is to go from an input-output relationship to a
dynamical system with state observation and partial observation of
the state

\begin{equation}
x_{t+1}=Ax_{t}+Bv_{t}\label{eq:1-1}
\end{equation}

\[
y_{t}=Cx_{t}
\]

The function $v_{t}$ is the input and the function $y_{t}$ is the
out-put. We have $v_{t}\in R^{m}$ ,$t=1,\cdots$ and $y_{t}\in R^{p},\:t=1,\cdots.$
The map $v\rightarrow y$ is the input-output relationship. If this
map can be written as (\ref{eq:1-1}) then we say that the input-output
relationship has an internal state realization, denoted by $(A,B,C)$.
The function $x_{t}\in R^{n},t=1,\cdots$ is the state of the system.
The number $n$ is called the model order. The identification consists
in finding three matrices $A,B,C$ such that (\ref{eq:1-1}) holds
, given the input-output relationship. We can write the observation
$y_{t}$ as 
\begin{equation}
y_{t+1}=CA^{t}x_{1}+\sum_{s=0}^{t}G_{t-s}v_{s+1},\:t\geq0\label{eq:1-2}
\end{equation}

where

\begin{equation}
G_{t}=CA^{t-1}B,\:t\geq1,\:G_{0}=0\label{eq:1-3}
\end{equation}

are the Markov parameters. We set $\mathcal{G}=(G_{0},G_{1},\cdots),$
called the impulse response of the system.

\subsection{MINIMUM REALIZATION THEORY}

The problem solved in classical dynamic systems theory consists in
finding matrices $A,B,C$ which satisfy (\ref{eq:1-3}) for a large
number of $t$. This research topic has raised a huge amount of work.
It supposes to know the impulse response $\mathcal{G}$ of the dynamic
system. Beautiful results have been obtained to characterize impulse
responses for which there exists an internal state realization, and
the issue of uniqueness. The basic tool is the block Hankel matrix

\begin{equation}
\mathcal{H}_{r,r'}(\mathcal{G})=\left[\begin{array}{ccccc}
G_{1} & G_{2} & G_{3} & \cdots & G_{r'}\\
G_{2} & G_{3} & G_{4} & \dots & G_{r'+1}\\
G_{3} & G_{4} & G_{5} & \cdots & G_{r'+2}\\
\vdots & \vdots & \vdots & \ddots & \vdots\\
G_{r} & G_{r+1} & G_{r+2} & \cdots & G_{r+r'-1}
\end{array}\right]\label{eq:1-4}
\end{equation}

There exists an internal state realization if the Block Hankel matrix
can be written as follows

\begin{equation}
\mathcal{H}_{r,r'}(\mathcal{G})=\mathcal{O}_{r}(C,A)\mathcal{C}_{r'}(A,B),\:\forall r,r'\label{eq:1-5}
\end{equation}

with

\begin{equation}
\mathcal{O}_{r}(C,A)=\left[\begin{array}{c}
C\\
CA\\
\vdots\\
CA^{r-1}
\end{array}\right]\label{eq:1-6}
\end{equation}

\begin{equation}
\mathcal{C}_{r'}(A,B)=\left[\begin{array}{cccc}
B & AB & \cdots & A^{r'-1}B\end{array}\right]\label{eq:1-7}
\end{equation}

The matrix $\mathcal{O}_{r}(C,A)$ is the observability matrix and
the matrix $\mathcal{C}_{r'}(A,B)$ is the controllability matrix.The
pair $A,C$ is said observable if the observability matrix has full
rank. The pair $A,B$ is controllable if the controllability matrix
has full rank. If an internal state realization exists , then it is
minimal if the model order is minimal . Kalman proved the important
result \cite{REK}: A realization $(A,B,C)$ is minimal if and only
if the pair $(A,B)$ is controllable and the pair $(A,C)$ is observable.
A minimal realization is unique up to a change of basis of the state
space. Silverman \cite{SIL} proved the following characterization:
An impulse response $\mathcal{G}$ has a realization if and only if
there exist positive integers $r,r'$ and $\rho$ such that

\begin{equation}
\text{rank}\mathcal{H}_{r,r'}(\mathcal{G})=\text{rank }\mathcal{H}_{r+1,r'+j}(\mathcal{G})=\rho\label{eq:1-8}
\end{equation}

for $j=1,2,\cdots.$ The integer $\rho$ is the minimal order of the
system.

In the sequel , we will consider the dynamic system

\begin{equation}
x_{t+1}=Ax_{t},\:t\geq1\label{eq:200}
\end{equation}

\[
x_{1}=x
\]
with observation

\begin{equation}
y_{t}=Cx_{t}\label{eq:2-3}
\end{equation}

with $x_{t}\in R^{n},$$y_{t}\in R^{p}.$ To simplify we have taken
an input $v_{t}=0$, so there is no way we can learn about a potential
matrix $B.$Because there is no input, the only way to stir the system
is to have a non zero initial state $x.$ To simplify further , we
assume that $x$ and the matrix $C$ are known. The number $n$ is
the model order, which is fixed. So the only unknown is the matrix
$A.$

\section{OBSERVATION OF THE STATE}

We assume here that $C=I$, identity, so the state of the system $x_{t}$
is observable, but the $n\times n$ matrix $A$ is unknown and must
be identified.

\subsection{LEAST SQUARE APPROACH}

If we stack

\[
X_{T}=\left[\begin{array}{c}
x_{2}^{*}\\
\vdots\\
x_{T}^{*}
\end{array}\right]\in\mathcal{L}(R^{n};R^{T-1}),\;Z_{T}=\left[\begin{array}{c}
x_{1}^{*}\\
\vdots\\
x_{T-1}^{*}
\end{array}\right]\in\mathcal{L}(R^{n};R^{T-1})
\]
we can write

\[
X_{T}=Z_{T}A^{*}
\]
and $A^{*}$ can be recovered by

\[
A^{*}=(Z_{T}^{*}Z_{T})^{-1}Z_{T}^{*}X_{T}
\]
provided $Z_{T}^{*}Z_{T}\in\mathcal{L}(R^{n};R^{n})$ is invertible.
So

\begin{equation}
A=\sum_{t=1}^{T-1}x_{t+1}x_{t}^{*}(\sum_{t=1}^{T-1}x_{t}x_{t}^{*})^{-1}\label{eq:2-2}
\end{equation}

\subsection{\label{sec:MACHINE-LEARNING-APPROACH}MACHINE LEARNING APPROACH }

The basic idea is to complete the least square function with a penalty
term. We thus define the function

\begin{equation}
J_{\gamma}(A)=\frac{1}{2}\text{tr}(AA^{*})+\dfrac{\gamma}{2}\sum_{t=1}^{T-1}|x_{t+1}-Ax_{t}|^{2}\label{eq:3-1}
\end{equation}
in which the vectors $x_{t}$ are known. The solution that we get
by this approach is different from (\ref{eq:2-2}). However, it coincides
when $\gamma=+\infty.$ 
\begin{prop}
\label{prop-1} The solution of problem (\ref{eq:3-1}) is given by
formula

\begin{equation}
A^{\gamma}=\sum_{t=1}^{T-1}x_{t+1}x_{t}^{*}(\frac{I}{\gamma}+\sum_{t=1}^{T-1}x_{t}x_{t}^{*})^{-1}\label{eq:3-2}
\end{equation}
\end{prop}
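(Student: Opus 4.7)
My plan is to minimize $J_\gamma$ by the standard first-order optimality condition: $J_\gamma$ is a strictly convex quadratic function of the entries of $A$, so any critical point is the unique global minimizer. Hence it suffices to compute the gradient $\nabla_A J_\gamma$, set it to zero, and solve the resulting linear matrix equation for $A$.

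First I would compute the two gradients separately. Writing the trace term as $\tfrac{1}{2}\text{tr}(AA^*)=\tfrac{1}{2}\sum_{i,j}A_{ij}^2$ gives $\nabla_A[\tfrac{1}{2}\text{tr}(AA^*)]=A$. For each summand of the data-fit term, $\tfrac{1}{2}|x_{t+1}-Ax_t|^2$, a direct computation (or a one-line directional derivative $\tfrac{d}{d\varepsilon}\tfrac{1}{2}|x_{t+1}-(A+\varepsilon H)x_t|^2\big|_{\varepsilon=0}=\langle (Ax_t-x_{t+1})x_t^*,H\rangle$) yields gradient $(Ax_t-x_{t+1})x_t^*$. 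Summing, the optimality condition $\nabla_A J_\gamma(A)=0$ becomes
\begin{equation*}
A+\gamma\sum_{t=1}^{T-1}(Ax_t-x_{t+1})x_t^*=0.
\end{equation*}

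Next I would rearrange this into the form $A M=N$ with $M=\tfrac{I}{\gamma}+\sum_{t=1}^{T-1}x_tx_t^*$ and $N=\sum_{t=1}^{T-1}x_{t+1}x_t^*$, obtained by dividing by $\gamma$ and collecting the terms multiplying $A$ on the right. The matrix $M$ is the sum of the strictly positive definite $\tfrac{I}{\gamma}$ and the positive semidefinite Gram matrix $\sum_t x_tx_t^*$, hence positive definite and in particular invertible for every finite $\gamma>0$; this is the only nontrivial point in the argument and it is the reason the penalty term is essential (without it one recovers (\ref{eq:2-2}), which requires the $x_t$'s to span $R^n$). Multiplying on the right by $M^{-1}$ yields formula (\ref{eq:3-2}).

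Finally I would note strict convexity of $J_\gamma$: the Hessian of the map $A\mapsto\tfrac{1}{2}\text{tr}(AA^*)$ is the identity on matrix space, so the Hessian of $J_\gamma$ dominates the identity and $J_\gamma$ is strongly convex. Thus the critical point found above is the unique global minimizer, completing the proof. As a sanity check, letting $\gamma\to\infty$ in (\ref{eq:3-2}) formally recovers (\ref{eq:2-2}), consistent with the remark preceding the proposition.
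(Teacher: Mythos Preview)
Your proof is correct and follows exactly the approach indicated in the paper: note that $J_\gamma$ is strictly convex quadratic, compute the gradient, set it to zero, and solve the resulting linear equation using the invertibility of $\tfrac{I}{\gamma}+\sum_t x_tx_t^*$. The paper's own proof is a one-line sketch of precisely this argument, so your write-up is simply a fully detailed version of it.
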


\begin{proof}
The function $J_{\gamma}(A)$ is convex quadratic. The result is obtained
easily from computing the gradient of $J_{\gamma}(A).$$\blacksquare$ 
\end{proof}

\subsection{OTHER FORMULATIONS}

We introduce the vector $p_{t+1},t=1,\cdots,T-1$ by the formula

\begin{equation}
x_{t+1}-A_{\gamma}x_{t}=-\dfrac{1}{\gamma}p_{t+1}\label{eq:3-3}
\end{equation}
then a simple calculation shows that

\begin{equation}
A^{\gamma}=-\sum_{t=1}^{T-1}p_{t+1}x_{t}^{*}\label{eq:3-4}
\end{equation}
So $A_{\gamma}$ appears as a linear combination of the vectors $x_{t}.$
So also, combining (\ref{eq:3-3}) and (\ref{eq:3-4}) we obtain

\begin{equation}
\dfrac{p_{t+1}}{\gamma}+\sum_{s=1}^{T-1}x_{t}.x_{s}p_{s+1}=-x_{t+1}\label{eq:3-5}
\end{equation}
which defines uniquely the coefficients $p_{2},\cdots,p_{T}$ entering
in formula (\ref{eq:3-4}).

\subsection{DUAL PROBLEM }

The system (\ref{eq:3-5}) can be interpreted as a necessary and sufficient
condition of optimality for a different problem, called the dual problem.
The decision is a control $q_{1},\cdots,q_{T-1}$ where $q_{t}\in R^{n}$.
We define the payoff

\begin{equation}
K_{\gamma}(q)=\dfrac{1}{2\gamma}\sum_{t=1}^{T-1}|q_{t}|^{2}+\dfrac{1}{2}\sum_{t,s=1}^{T-1}x_{t}.x_{s}q_{s}.q_{t}+\sum_{t=1}^{T-1}x_{t+1}.q_{t}\label{eq:3-6}
\end{equation}
and the optimal $q=(q_{1},\cdots,q_{T-1}$) is the control $(p_{2},\cdots,p_{T})$
solution of the system (\ref{eq:3-5}).

\subsection{GRADIENT DESCENT ALGORITHM }

Consider the payoff $J(A)=J_{\gamma}(A),$ we drop the index $\gamma$
for simplicity. We can compute the gradient $DJ(A)$ which is a matrix

\begin{equation}
DJ(A)=A(I+\gamma\sum_{t=1}^{T-1}x_{t}x_{t}*)-\gamma\sum_{t=1}^{T-1}x_{t+1}x_{t}^{*}\label{eq:3-7}
\end{equation}
The optimal value of $A,$ noted $A^{\gamma}$ satisfies $DJ(A^{\gamma})=0.$
A gradient descent algorithm is defined by the sequence

\begin{equation}
A^{n+1}=A^{n}-\rho DJ(A^{n})\label{eq:3-8}
\end{equation}
where $\rho$ is a positive number to be chosen conveniently. We use

\[
\dfrac{d}{d\theta}J(A^{n}-\rho\theta DJ(A^{n}))=-\rho\text{tr}DJ(A^{n}-\rho\theta DJ(A^{n}))(DJ(A^{n}))^{*}
\]
So

\begin{equation}
J(A^{n+1})-J(A^{n})=-\rho\text{tr}DJ(A^{n})(DJ(A^{n}))^{*}-\rho\int_{0}^{1}\text{tr }\left(DJ(A^{n}-\rho\theta DJ(A^{n}))-DJ(A^{n})\right)(DJ(A^{n}))^{*}d\theta\label{eq:3-9}
\end{equation}

\[
=-\rho\text{tr}DJ(A^{n})(DJ(A^{n}))^{*}+\rho^{2}\int_{0}^{1}\theta\text{tr (}DJ(A^{n})(I+\gamma\sum_{t=1}^{T-1}x_{t}x_{t}*)(DJ(A^{n}))^{*})d\theta=
\]

\begin{equation}
=(-\rho+\dfrac{\rho^{2}}{2})\text{tr}DJ(A^{n})(DJ(A^{n}))^{*}+\dfrac{\rho^{2}}{2}\gamma\text{tr }\left(DJ(A^{n})\sum_{t=1}^{T-1}x_{t}x_{t}*(DJ(A^{n}))^{*}\right)=\label{eq:3-90}
\end{equation}
\[
\leq\rho(-1+\dfrac{\rho}{2}(1+\gamma\sum_{t=1}^{T-1}|x_{t}|^{2}))\text{tr}DJ(A^{n})(DJ(A^{n}))^{*}
\]
We obtain the 
\begin{prop}
\label{prop3-1} Asume that

\begin{equation}
2<\rho<\dfrac{2}{1+\gamma\sum_{t=1}^{T-1}|x_{t}|^{2}}\label{eq:3-10}
\end{equation}
then $A^{n}\rightarrow A^{\gamma}$ given by formula (\ref{eq:3-2})
which satisfies $DJ(A^{\gamma})=0.$ 
\end{prop}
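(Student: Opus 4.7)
The plan is to leverage the descent inequality already derived at the end of the chain (3.9)--(3.90), use coercivity of $J_\gamma$ to extract square-summability of the gradient norms, and finally exploit the explicit linear form of $DJ$ to turn $DJ(A^n)\to 0$ into $A^n\to A^\gamma$. Before starting, I would note that the stated lower bound ``$2<\rho$'' must be read as ``$0<\rho$''; otherwise the admissible set is empty once $\gamma \sum_{t=1}^{T-1}|x_t|^2>0$. The role of the upper bound is precisely to make the scalar $-1+\tfrac{\rho}{2}(1+\gamma\sum_{t=1}^{T-1}|x_t|^2)$ strictly negative.

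First I would fix $\alpha:=1-\tfrac{\rho}{2}(1+\gamma\sum_{t=1}^{T-1}|x_t|^2)>0$ and rewrite the final line of (3.90) as
\[
J(A^{n+1})-J(A^{n})\leq -\rho\alpha\,\text{tr}\bigl(DJ(A^{n})(DJ(A^{n}))^{*}\bigr).
\]
Since both terms defining $J_\gamma$ in (3.1) are nonnegative, we have $J_\gamma(A)\geq 0$ for all $A$, so the sequence $J(A^n)$ is monotone decreasing and bounded below, hence convergent. Telescoping then yields
\[
\sum_{n\geq 0}\text{tr}\bigl(DJ(A^{n})(DJ(A^{n}))^{*}\bigr)<\infty,
\]
and in particular $DJ(A^{n})\to 0$ in Frobenius norm.

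Finally, I would use the closed form (3.7) together with the explicit optimum (3.2). Setting $M:=I/\gamma+\sum_{t=1}^{T-1}x_tx_t^{*}$, one sees from (3.2) that $A^{\gamma}(\gamma M)=\gamma\sum_{t=1}^{T-1}x_{t+1}x_t^{*}$, so
\[
DJ(A)=(A-A^{\gamma})(\gamma M),
\]
and $\gamma M$ is symmetric positive definite with smallest eigenvalue at least $1$. Consequently $\|DJ(A)\|_{F}\geq \|A-A^{\gamma}\|_{F}$, and the conclusion $A^n\to A^\gamma$ follows immediately from the previous step. The only real subtlety is the minor clean-up of the typo in the range of $\rho$ and the observation that the presence of the Tikhonov term $\tfrac12\text{tr}(AA^{*})$ is what makes $\gamma M$ uniformly invertible; all the analytic content is already packaged in (3.90), so I do not anticipate any substantive obstacle.
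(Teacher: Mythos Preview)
Your proof is correct, and you rightly flag the typo in the lower bound on $\rho$ (which should be $0<\rho$, not $2<\rho$; otherwise the interval is empty). The route you take differs from the paper's in its second half. The paper argues by compactness: from coercivity of $J$ it bounds $A^n$, extracts a convergent subsequence $A^n\to A$, passes to the limit in the \emph{equality} version of (3.90) (using $J(A^{n+1})-J(A^n)\to 0$), and then invokes a sign argument to force $DJ(A)=0$, concluding by uniqueness of the critical point. Your argument instead telescopes the descent inequality to get $\sum_n\|DJ(A^n)\|_F^2<\infty$ directly for the full sequence, and then exploits the affine structure $DJ(A)=(A-A^\gamma)(\gamma M)$ with $\gamma M\succeq I$ to convert $DJ(A^n)\to 0$ into $A^n\to A^\gamma$ without any subsequence extraction. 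Your approach is more elementary and more quantitative (it implicitly gives square-summability of $\|A^n-A^\gamma\|_F$), and it sidesteps a delicate point in the paper's sign argument: the claim there that ``the left hand side is negative'' tacitly uses $\rho>2$, which conflicts with the required upper bound on $\rho$. The compactness route, on the other hand, is what generalizes to the nonconvex partially observed setting treated later in the paper (Theorem~\ref{theo4-1}), where no closed-form critical point is available.
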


\begin{proof}
From the assumption (\ref{eq:3-10}) , we have $-1+\dfrac{\rho}{2}(1+\gamma\sum_{t=1}^{T-1}|x_{t}|^{2})<0,$
hence the sequence $J(A^{n})$ is decreasing, thus converging since
it is bounded below. From ( \ref{eq:3-1}) it is clear that the sequence
$A^{n}$ is bounded. We first note that $J(A^{n+1})-J(A^{n})\rightarrow0$.
Moreover, we can extract from $A^{n}$ a subsequence, still denoted
$A^{n}$ which converges to some $A.$ From (\ref{eq:3-90}) we can
immediately write

\[
(1-\dfrac{\rho}{2})\text{tr}DJ(A)(DJ(A))^{*}=\dfrac{\rho}{2}\gamma\text{tr }\left(DJ(A)\sum_{t=1}^{T-1}x_{t}x_{t}*(DJ(A))^{*}\right)
\]
From the condition on $\rho$ the left hand side is negative and the
right hand side positive. Necessarily $DJ(A)=0,$ hence $A=A^{\gamma}.$
Since the limit will be the same for any converging subsequence, the
full sequence converges, which completes the proof. $\blacksquare$ 
\end{proof}

\subsection{RECURSIVITY }

We emphasize here the dependence of $A^{\gamma}$ with respect to
$T.$ So we shall write $A^{T}=A^{\gamma}$ and we want to calculate
$A^{T+1}.$ We first introduce

\begin{equation}
B^{T}=(\frac{I}{\gamma}+\sum_{t=1}^{T-1}x_{t}x_{t}^{*})^{-1}\label{eq:3-11}
\end{equation}
then clearly

\begin{equation}
(B^{T+1})^{-1}=(B^{T})^{-1}+x_{T}x_{T}^{*}\label{eq:3-12}
\end{equation}
and we can see that

\begin{equation}
A^{T+1}=A^{T}+(x_{T+1}-A^{T}x_{T})x_{T}^{*}B^{T+1}\label{eq:3-13}
\end{equation}
In this way, we can compute $A^{T}$ recursively.

\subsection{ASYMPTOTIC ANALYSIS}

We can check easily that the matrix $A^{\gamma}$ converges as $\gamma\rightarrow+\infty$
towards the solution of the least square problem (\ref{eq:2-2}).
In fact we can write the asymptotic exapansion

\begin{equation}
A^{\gamma}=\sum_{t=1}^{T-1}x_{t+1}x_{t}^{*}(\sum_{t=1}^{T-1}x_{t}x_{t}^{*})^{-1}\left(I+\right.\label{eq:3-14}
\end{equation}

\[
\left.\sum_{j=1}^{+\infty}\dfrac{(-1)^{j}}{\gamma^{j}}(\sum_{t=1}^{T-1}x_{t}x_{t}^{*})^{-j}\right)
\]
This result requires the invertibility of the matrix $\sum_{t=1}^{T-1}x_{t}x_{t}^{*}.$
If this is not true, we can state a weaker result . Since the observation
$x_{t}$ is not arbitrary, we may assume that there exists a matrix
$\bar{A}$ such that

\begin{equation}
x_{t+1}=\bar{A}x_{t},\:t=1,\cdots T-1\label{eq:3-15}
\end{equation}
We can state the 
\begin{prop}
\label{prop3-2}Assume the existence of matrices $\bar{A}$ such that
(\ref{eq:3-15}) holds. Then the matrix $A^{\gamma}$ converges as
$\gamma\rightarrow+\infty$ towards the matrix $\bar{A}$ of minimum
norm. 
\end{prop}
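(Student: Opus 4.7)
The plan is to first use the hypothesis (\ref{eq:3-15}) to rewrite $A^{\gamma}$ in a form whose $\gamma\to+\infty$ limit can be read off, and then to identify that limit as the minimum-norm element of the affine set of admissible matrices. Substituting $x_{t+1}=\bar A x_t$ into formula (\ref{eq:3-2}) gives
\[
A^{\gamma} = \bar A\, M_T\left(\frac{I}{\gamma}+M_T\right)^{-1}, \qquad M_T:=\sum_{t=1}^{T-1}x_t x_t^*.
\]
The matrix $M_T$ is symmetric positive semi-definite, hence admits a spectral decomposition $M_T=U\Lambda U^*$ with $\Lambda=\mathrm{diag}(\lambda_1,\ldots,\lambda_n)$, $\lambda_i\geq 0$. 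The factor $M_T(I/\gamma+M_T)^{-1}$ then equals $U\,\mathrm{diag}\!\bigl(\lambda_i/(\lambda_i+1/\gamma)\bigr)U^*$, whose diagonal entries tend to $1$ whenever $\lambda_i>0$ and to $0$ otherwise. The limit is therefore the orthogonal projector $P$ onto $\mathrm{Range}(M_T)=\mathrm{span}(x_1,\ldots,x_{T-1})$, so $A^{\gamma}\to \bar A P$ as $\gamma\to+\infty$.

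It then remains to show that $\bar A P$ minimizes $\mathrm{tr}(AA^*)$ over the affine set $\mathcal A=\{A:\ Ax_t=x_{t+1},\ t=1,\ldots,T-1\}$. Every $A\in\mathcal A$ agrees with $\bar A$ on $\mathrm{Range}(M_T)$, so $AP=\bar A P$, and one may decompose $A=\bar A P + A(I-P)$. Since $P$ is self-adjoint and $P(I-P)=0$, the cross terms in $AA^*$ vanish, leading to
\[
\mathrm{tr}(AA^*)=\mathrm{tr}(\bar A P\bar A^*)+\mathrm{tr}\bigl(A(I-P)A^*\bigr)\geq \mathrm{tr}(\bar A P\bar A^*),
\]
with equality iff $A(I-P)=0$. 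Hence $\bar A P$ is the unique minimum-norm element of $\mathcal A$, and therefore coincides with the limit of $A^{\gamma}$.

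The main difficulty is conceptual rather than computational: when $M_T$ is singular, $\bar A$ in (\ref{eq:3-15}) is not unique, so the statement ``the matrix $\bar A$ of minimum norm'' has to be interpreted as the minimum-norm representative of $\mathcal A$. In the same spirit one should verify that the product $\bar A P$ appearing in the limit is intrinsic to the data---any two admissible matrices $\bar A$ differ by one that vanishes on $\mathrm{Range}(M_T)$, and therefore agree after right-multiplication by $P$. Once this ambiguity is resolved, the rest reduces to routine linear algebra, and $P$ may alternatively be recognized as $M_T M_T^{+}$ with $M_T^{+}$ the Moore--Penrose pseudoinverse.
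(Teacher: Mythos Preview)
Your argument is correct, and it follows a genuinely different route from the paper. The paper's proof is purely variational: it compares $J_\gamma(A^\gamma)$ with $J_\gamma(\bar A)=\tfrac12\,\mathrm{tr}(\bar A\bar A^*)$, extracts boundedness of $A^\gamma$ and $\sum_t|x_{t+1}-A^\gamma x_t|^2\to 0$, and then passes to convergent subsequences, using the same inequality once more to force the limit to be the minimum-norm element of the constraint set. No explicit formula for the limit is ever written down.

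Your approach, by contrast, is algebraic and constructive. Substituting $x_{t+1}=\bar A x_t$ into (\ref{eq:3-2}) and diagonalizing $M_T$ lets you compute the limit explicitly as $\bar A P$, with $P$ the orthogonal projector onto $\mathrm{span}(x_1,\dots,x_{T-1})$; you then verify directly that $\bar A P$ is the unique Frobenius-minimum element of $\mathcal A$ via the orthogonal decomposition $A=\bar A P+A(I-P)$. This buys you more: an explicit description of the limiting matrix (as $\bar A M_T M_T^{+}$), and a clean explanation of why the choice of admissible $\bar A$ is immaterial. The paper's compactness argument is shorter and transports more easily to the partially observable setting (Proposition~\ref{prop5-1}), where no closed-form for the minimizer is available; your spectral computation exploits the closed form (\ref{eq:3-2}), which exists only in the fully observed case.
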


\begin{proof}
From (\ref{eq:3-1}) we can write

\begin{equation}
\frac{1}{2}\text{tr}(A^{\gamma}(A^{\gamma})^{*})+\dfrac{\gamma}{2}\sum_{t=1}^{T-1}|x_{t+1}-A^{\gamma}x_{t}|^{2}\leq\frac{1}{2}\text{tr}(\bar{A}(\bar{A})^{*})\label{eq:3-16}
\end{equation}
from which it follows immediately that

\[
A^{\gamma}\text{ is bounded },\:\sum_{t=1}^{T-1}|x_{t+1}-A^{\gamma}x_{t}|^{2}\rightarrow0,\:\text{as}\:\gamma\rightarrow+\infty
\]
So , it is clear that any converging subsequence will tend towards
one matrix $\bar{A}$ satisfying (\ref{eq:3-15}). Thanks to (\ref{eq:3-16})
in which the right hand side refers to any matrix $\bar{A}$ satisfying
(\ref{eq:3-15}), it is clear that the limit point is unique and is
the matrix $\bar{A}$ satisfying (\ref{eq:3-15}) of minimum norm.
This completes the proof of the result. $\blacksquare$ 
\end{proof}

\section{PARTIALLY OBSERVABLE SYSTEM}

\subsection{THE MODEL }

We extend the identification problem above to the case of partially
observable systems. So we have

\begin{equation}
x_{t+1}=Ax_{t}\label{eq:4-1}
\end{equation}

\[
x_{1}=x
\]
and

\begin{equation}
y_{t}=Cx_{t}\label{eq:4-2}
\end{equation}
with $C\in\mathcal{L}(R^{n};R^{d}).$ In the model (\ref{eq:4-1}),
(\ref{eq:4-2}) we suppose that we know the matrix $C$ and the initial
condition $x.$ We want to find the unknown matrix $A$. This problem
generalizes the problem considered in the previous sections, which
is recovered when $C=I.$

\subsection{\label{subsec:A-NATURAL-APPROACH}A NATURAL APPROACH }

Let us assume that the rows of $C$ are linearly independent, which
implies

\begin{equation}
CC^{*}\text{is invertible}\label{eq:4-3}
\end{equation}
then the vector $C^{*}(CC^{*})^{-1}y_{t}$ is solution of (\ref{eq:4-2})
and is the solution with minimum norm. So we can naturally consider
that the state $x_{t},t\geq2$ is in fact reasonably estimated by
$C^{*}(CC^{*})^{-1}y_{t}$ and we are back in the situation of fully
observable systems . So we can estimate $A$ by the formula

\begin{equation}
A_{\gamma}=\sum_{t=1}^{T-1}\hat{x}_{t+1}(\hat{x}_{t})^{*}(\frac{I}{\gamma}+\sum_{t=1}^{T-1}\hat{x}_{t}(\hat{x}_{t})^{*})^{-1}\label{eq:4-4}
\end{equation}
with

\begin{equation}
\hat{x}_{1}=x,\;\hat{x}_{t}=C^{*}(CC^{*})^{-1}y_{t},t=2,\cdots T\label{eq:4-6}
\end{equation}
and we can proceed with similar considerations as above

\subsection{MACHINE LEARNING APPROACH}

A machine learning approach in the spirit of section \ref{sec:MACHINE-LEARNING-APPROACH}
would be to look for $A$ and vectors $x_{t},t=2,\cdots T$ to minimize
the functional

\begin{equation}
J(A,x(.))=\frac{1}{2}\text{tr}(AA^{*})+\frac{\gamma}{2}\sum_{t=1}^{T-1}|x_{t+1}-Ax_{t}|^{2}+\dfrac{\mu}{2}\sum_{t=2}^{T}|y_{t}-Cx_{t}|^{2}\label{eq:4-8}
\end{equation}
with $x_{1}=x.$ In this payoff $x_{t},t=2,\cdots T$ are decision
variables, unlike in the above sections. We note the introduction
of the parameter $\mu.$ The case $\mu=+\infty$ corresponds to the
situation of section \ref{subsec:A-NATURAL-APPROACH}. This problem
leads surprisingly to considerable difficulties. The reason is because
the functional $J(A,x(.))$ is not convex in the pair of arguments
$A,x(.).$ It is convenient to make a change of arguments. We replace
$x(.)$ by $v(.),\:v_{1},\cdots,v_{T-1}$ and define the state $x_{t}$
by the relations

\begin{equation}
x_{t+1}-Ax_{t}=v_{t},\:t=1,\cdots,T-1\label{eq:4-80}
\end{equation}
\[
x_{1}=x
\]
So we define

\begin{equation}
J(A,v(.))=\frac{1}{2}\text{tr}(AA^{*})+\frac{\gamma}{2}\sum_{t=1}^{T-1}|v_{t}|^{2}+\dfrac{\mu}{2}\sum_{t=2}^{T}|y_{t}-Cx_{t}|^{2}\label{eq:4-81}
\end{equation}
with $x_{t}$ defined by (\ref{eq:4-80}). Since the values of $y_{t}$
are not arbitrary, we shall assume that there exists $\bar{A}$ such
that , setting

\begin{equation}
\bar{x}_{t+1}=\bar{A}\bar{x}_{t},\:t=1,\cdots,T-1\label{eq:4-82}
\end{equation}

\[
\bar{x}_{1}=x
\]

\[
y_{t}=C\bar{x}_{t}
\]
so we have the inequality

\begin{equation}
\inf_{A,v(.)}J(A,v(.))\leq\frac{1}{2}\text{tr}(\bar{A}\bar{A}^{*})\label{eq:4-83}
\end{equation}
However, this bound is nor really known, since $\bar{A}$ is not known.
A more practical bound will be

\begin{equation}
\inf_{A,v(.)}J(A,v(.))\leq\dfrac{\mu}{2}\sum_{t=2}^{T}|y_{t}|^{2}\label{eq:4-84}
\end{equation}
This bound depends on the parameter $\mu,$and will not be useful
when we let $\mu\rightarrow+\infty.$

To simplify notation , we shall write $Z=(A,v(.))$ . The space of
vectors $Z$ is called $\mathcal{Z}$ and define the norm in $\mathcal{Z}$
by

\begin{equation}
||Z||^{2}=\text{tr}(AA^{*})+\sum_{t=1}^{T-1}|v_{t}|^{2}\label{eq:4-85}
\end{equation}
We shall compute the gradient $DJ(Z)$ . For that , we introduce the
sequences of vectors $p_{t},t=1,\cdots T$ defined by

\begin{equation}
p_{t}=A^{*}p_{t+1}-\mu C^{*}(y_{t}-Cx_{t}),\:t=1,\cdots T-1\label{eq:4-9}
\end{equation}

\[
p_{T}=-\mu C^{*}(y_{T}-Cx_{T})
\]
We have the 
\begin{lem}
\label{lem4-1}The gradient of the function $J(A,v(.))$ is given
by the formulas 
\end{lem}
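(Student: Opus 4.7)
The plan is to derive the gradient by the standard Lagrangian/adjoint calculation for a discrete optimal-control-type cost, treating $v(\cdot)$ as the control and the state equation \eqref{eq:4-80} as the constraint that generates $x_t$. The adjoint sequence $p_t$ defined in \eqref{eq:4-9} is exactly what is needed to absorb the induced state perturbations, so the task reduces to a careful summation-by-parts.

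First I would perturb $(A,v(\cdot))$ by $(\delta A,\delta v(\cdot))$ and linearize \eqref{eq:4-80}: the induced perturbation $\delta x_t$ satisfies $\delta x_1=0$ and $\delta x_{t+1}=A\,\delta x_t+\delta A\,x_t+\delta v_t$ for $t=1,\ldots,T-1$. Differentiating \eqref{eq:4-81} directly then gives
\[
\delta J=\operatorname{tr}(A\,\delta A^{*})+\gamma\sum_{t=1}^{T-1}v_t\cdot\delta v_t-\mu\sum_{t=2}^{T}(y_t-Cx_t)\cdot C\,\delta x_t.
\]
The remaining issue is that $\delta x_t$ is not a free variable; it must be expressed in terms of $\delta A$ and $\delta v$.

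For that I would rewrite $\sum_{t=2}^{T}p_t\cdot\delta x_t=\sum_{t=1}^{T-1}p_{t+1}\cdot\delta x_{t+1}$ and substitute the linearized state equation, producing the three terms $\sum_{t=1}^{T-1}(A^{*}p_{t+1})\cdot\delta x_t$, $\sum_{t=1}^{T-1}p_{t+1}\cdot\delta A\,x_t$ and $\sum_{t=1}^{T-1}p_{t+1}\cdot\delta v_t$. Using $\delta x_1=0$ in the first sum and then substituting the adjoint identity $A^{*}p_{t+1}=p_t+\mu C^{*}(y_t-Cx_t)$ from \eqref{eq:4-9}, telescoping the $p_t\cdot\delta x_t$ terms with the terminal condition $p_T=-\mu C^{*}(y_T-Cx_T)$ will precisely cancel the observation-mismatch sum, yielding the identity
\[
-\mu\sum_{t=2}^{T}(y_t-Cx_t)\cdot C\,\delta x_t=\sum_{t=1}^{T-1}p_{t+1}\cdot(\delta A\,x_t+\delta v_t).
\]
Inserting this into the expression for $\delta J$ and rewriting $p_{t+1}\cdot\delta A\,x_t=\operatorname{tr}(\delta A^{*}\,p_{t+1}x_t^{*})$ allows the final identification of the partial gradients, which will read $D_{A}J(Z)=A+\sum_{t=1}^{T-1}p_{t+1}x_t^{*}$ and $D_{v_t}J(Z)=\gamma v_t+p_{t+1}$ for $t=1,\ldots,T-1$.

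The only real obstacle is bookkeeping: keeping the index shifts consistent in the summation by parts, using $\delta x_1=0$ to kill the initial boundary term, and matching the terminal boundary term $p_T\cdot\delta x_T$ with the last summand of the observation cost via the definition of $p_T$. There is no analytic difficulty since $J$ is a polynomial in $(A,v)$ once $x_t$ is expressed through \eqref{eq:4-80}; the adjoint is simply the efficient way to organize the chain rule.
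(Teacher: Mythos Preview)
Your proposal is correct and follows essentially the same route as the paper: compute the directional derivative of $J$, linearize the state equation to obtain the evolution of the perturbed state, and use the adjoint system \eqref{eq:4-9} together with a summation by parts (with $\delta x_1=0$ and the terminal condition on $p_T$) to eliminate the induced state perturbation. The paper compresses the summation-by-parts step into the phrase ``Using \eqref{eq:4-9} we get easily,'' whereas you spell it out; the underlying computation is identical.
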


\begin{equation}
DJ(Z)=\left|\begin{array}{c}
A+\sum_{t=1}^{T-1}p_{t+1}x_{t}^{*}\\
\\
\gamma v_{t}+p_{t+1},\:t=1,\cdots T-1
\end{array}\right.\label{eq:4-101}
\end{equation}
with $x_{t}$ given by (\ref{eq:4-80}) and $p_{t}$ given by (\ref{eq:4-9}). 
\begin{proof}
A simple calculation yields

\begin{equation}
\cfrac{d}{d\theta}J(Z+\theta\tilde{Z})|_{\theta=0}=\text{tr }A(\tilde{A})^{*}+\gamma\sum_{t=1}^{T-1}v_{t}.\tilde{v}_{t}-\mu\sum_{t=2}^{T}(y_{t}-Cx_{t}).C\tilde{x}_{t}\label{eq:4-10}
\end{equation}
with

\[
\tilde{x}_{t+1}=A\tilde{x}_{t}+\tilde{A}x_{t}+\gamma v_{t}+p_{t+1},t=1,\cdots T-1
\]
\[
\tilde{x}_{1}=0
\]
Using (\ref{eq:4-9}) we get easily

\[
\cfrac{d}{d\theta}J(Z+\theta\tilde{Z})|_{\theta=0}=\text{tr }(A+\sum_{t=1}^{T-1}p_{t+1}x_{t}^{*})(\tilde{A})^{*}+
\]

\[
+\sum_{t=1}^{T-1}(\gamma v_{t}+p_{t+1})\tilde{v}_{t}
\]
and the result follows. $\blacksquare$ 
\end{proof}

\subsection{NECESSARY CONDITIONS OF OPTIMALITY}

A minimum point ( or a local minimum point) $\hat{z}=($$\hat{A},\hat{v}_{t},t=1,\cdots T-1)$
will satisfy the equations $DJ(\hat{Z})=0$ . Therefore

\begin{equation}
\hat{A}+\sum_{t=1}^{T-1}\hat{p}_{t+1}(\hat{x}_{t})^{*}=0\label{eq:4-11}
\end{equation}

\[
\gamma\hat{v}_{t}+\hat{p}_{t+1}=0
\]

\begin{equation}
\hat{x}_{t+1}-\hat{A}\hat{x}_{t}+\dfrac{\hat{p}_{t+1}}{\gamma}=0,\:t=1,\cdots T-1,\:\hat{x}_{1}=x\label{eq:4-12}
\end{equation}
\[
\hat{p}_{t}=(\hat{A})^{*}\hat{p}_{t+1}-\mu C^{*}(y_{t}-C\hat{x}_{t}),t=1,\cdots T-1,\:\hat{p}_{T}=-\mu C^{*}(y_{T}-C\hat{x}_{T})
\]
We claim 
\begin{prop}
\label{prop4-1}We assume (\ref{eq:4-82}). The set of miminimum of
the function $J(Z)$ is not empty and thus the set of triple $\hat{A}$,
$\hat{x}_{t},\hat{p}_{t}$ satisfying (\ref{eq:4-11}), (\ref{eq:4-12})
is not empty. 
\end{prop}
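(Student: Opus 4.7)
The plan is to apply the direct method of the calculus of variations in the finite-dimensional space $\mathcal{Z}$. Non-convexity of $J(A,v(\cdot))$ in the pair $(A,v(\cdot))$ prevents a one-shot convex analysis argument, but existence still follows from continuity plus coercivity, and once existence is secured the optimality system (\ref{eq:4-11})--(\ref{eq:4-12}) is read off from Lemma \ref{lem4-1}.

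First I would verify that $J$ is coercive on $\mathcal{Z}$. Looking at (\ref{eq:4-81}) and dropping the nonnegative observation term, one has immediately
\[
J(Z)\;\geq\;\tfrac{1}{2}\,\text{tr}(AA^{*})+\tfrac{\gamma}{2}\sum_{t=1}^{T-1}|v_{t}|^{2}\;\geq\;\tfrac{1}{2}\min(1,\gamma)\,\|Z\|^{2},
\]
so $J(Z)\to+\infty$ as $\|Z\|\to+\infty$. Continuity of $J$ on $\mathcal{Z}$ is clear: the recursion (\ref{eq:4-80}) expresses each $x_{t}$ as a polynomial function of the entries of $A$ and $v(\cdot)$, so $Z\mapsto(x_{1},\dots,x_{T})$ is continuous, and the three summands of $J$ are continuous functions of $Z$.

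Next I would use assumption (\ref{eq:4-82}) to show that $\inf_{\mathcal{Z}}J$ is finite. Indeed, taking $\bar{Z}=(\bar{A},0,\dots,0)$ one has $x_{t}=\bar{x}_{t}$ in (\ref{eq:4-80}), hence $y_{t}-Cx_{t}=0$ for $t=2,\dots,T$, so
\[
\inf_{\mathcal{Z}}J\;\leq\;J(\bar{Z})\;=\;\tfrac{1}{2}\text{tr}(\bar{A}\bar{A}^{*})\;<\;+\infty,
\]
which is exactly bound (\ref{eq:4-83}). Now pick any minimizing sequence $Z^{n}\in\mathcal{Z}$. By coercivity the sequence $\|Z^{n}\|$ is bounded; since $\mathcal{Z}$ is finite-dimensional, a subsequence (still denoted $Z^{n}$) converges to some $\hat{Z}\in\mathcal{Z}$. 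By continuity of $J$, $J(\hat{Z})=\lim_{n}J(Z^{n})=\inf_{\mathcal{Z}}J$, so $\hat{Z}=(\hat{A},\hat{v}(\cdot))$ is a global minimizer.

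Finally, at a global minimum the gradient vanishes, $DJ(\hat{Z})=0$. Applying Lemma \ref{lem4-1} with $\hat{x}_{t}$ generated from $(\hat{A},\hat{v}(\cdot))$ via (\ref{eq:4-80}) and $\hat{p}_{t}$ generated via (\ref{eq:4-9}), the two components of (\ref{eq:4-101}) read
\[
\hat{A}+\sum_{t=1}^{T-1}\hat{p}_{t+1}(\hat{x}_{t})^{*}=0,\qquad \gamma\hat{v}_{t}+\hat{p}_{t+1}=0,
\]
and substituting the second into (\ref{eq:4-80}) gives $\hat{x}_{t+1}-\hat{A}\hat{x}_{t}+\hat{p}_{t+1}/\gamma=0$, which together with the adjoint equation (\ref{eq:4-9}) is precisely the system (\ref{eq:4-11})--(\ref{eq:4-12}). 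The set of such triples is therefore nonempty.

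The conceptual obstacle is that $J$ is not jointly convex in $(A,v(\cdot))$, so one cannot deduce existence from lower semicontinuity of a convex functional in a weak topology. The rescue is that $\mathcal{Z}$ is finite-dimensional, so coercivity combined with ordinary continuity already yields compactness of minimizing sequences; the upper bound furnished by (\ref{eq:4-82}) only serves to guarantee that $\inf J<+\infty$ so that minimizing sequences exist in a meaningful sense. No further quantitative estimate is needed.
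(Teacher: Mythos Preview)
Your proof is correct and follows essentially the same direct-method argument as the paper: the bound (\ref{eq:4-83}) from assumption (\ref{eq:4-82}), boundedness of minimizing sequences via coercivity, and continuity of $J$ on the finite-dimensional space $\mathcal{Z}$. The paper's proof is simply a three-line compression of what you wrote, omitting the explicit coercivity inequality and the verification that $DJ(\hat{Z})=0$ reproduces (\ref{eq:4-11})--(\ref{eq:4-12}) via Lemma~\ref{lem4-1}.
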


\begin{proof}
In view of (\ref{eq:4-82}) , (\ref{eq:4-83}) holds. Therefore minimizing
sequences remain bounded . Since $J(Z)$ is continuous , the result
follows. 
\end{proof}

\subsection{GRADIENT DESCENT ALGORITHM }

We first show that the function $J(Z)$ has a second derivative $D^{2}J(Z)\in\mathcal{L}(\mathcal{Z};\mathcal{Z})$.
Indeed from (\ref{eq:4-101}) we can easily obtain

\begin{equation}
D^{2}J(Z)\tilde{Z}=\left|\begin{array}{c}
\tilde{A}+\sum_{t=1}^{T-1}\tilde{p}_{t+1}x_{t}^{*}+\sum_{t=1}^{T-1}p_{t+1}(\tilde{x}_{t})^{*}\\
\\
\gamma\tilde{v}_{t}+\tilde{p}_{t+1},\,t=1,\cdots,T-1
\end{array}\right.\label{eq:4-13}
\end{equation}
where $\tilde{Z}=(\tilde{A},\tilde{v}(.))$ and

\begin{equation}
\tilde{x}_{t+1}=A\tilde{x}_{t}+\tilde{A}x_{t}+\tilde{v}_{t},\;\tilde{x}_{1}=0,t=1,\cdots,T-1\label{eq:4-14}
\end{equation}
\[
\tilde{p}_{t}=A^{*}\tilde{p}_{t+1}+p_{t+1}(\tilde{A})^{*}+\mu C^{*}C\tilde{x}_{t},t=1,\cdots,T-1
\]
\[
\tilde{p}_{T}=\mu C^{*}C\tilde{x}_{T}
\]
We also state 
\begin{lem}
\label{lem4-2}We have the formula

\begin{equation}
<D^{2}J(Z)\tilde{Z},\tilde{Z}>=\text{tr }(\tilde{A}(\tilde{A})^{*})+2\sum_{t-1}^{T-1}p_{t+1}.\tilde{A}\tilde{x}_{t}+\gamma\sum_{t-1}^{T-1}|\tilde{v}_{t}|^{2}+\mu\sum_{t=2}^{T}|C\tilde{x}_{t}|^{2}\label{eq:4-15}
\end{equation}
\end{lem}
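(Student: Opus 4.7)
The plan is to compute $\langle D^2J(Z)\tilde Z,\tilde Z\rangle$ directly from (\ref{eq:4-13}), then use the perturbed forward equation (\ref{eq:4-14}) for $\tilde x_t$ and the perturbed adjoint equation for $\tilde p_t$ to carry out a discrete integration-by-parts (Abel summation) that telescopes the cross terms. The convention is that the inner product on $\mathcal{Z}$ corresponding to the norm (\ref{eq:4-85}) pairs the matrix components by the trace and the vector components by dot products.

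First I would pair $D^2J(Z)\tilde Z$ with $\tilde Z=(\tilde A,\tilde v(\cdot))$ and use $\text{tr}\bigl(\tilde p_{t+1}x_t^*(\tilde A)^*\bigr)=\tilde p_{t+1}\cdot\tilde A x_t$ together with $\text{tr}\bigl(p_{t+1}(\tilde x_t)^*(\tilde A)^*\bigr)=p_{t+1}\cdot\tilde A\tilde x_t$ to rewrite
\[
\langle D^2J(Z)\tilde Z,\tilde Z\rangle=\text{tr}(\tilde A(\tilde A)^*)+\sum_{t=1}^{T-1}\tilde p_{t+1}\cdot(\tilde A x_t+\tilde v_t)+\sum_{t=1}^{T-1}p_{t+1}\cdot\tilde A\tilde x_t+\gamma\sum_{t=1}^{T-1}|\tilde v_t|^2.
\]
One term involving $p_{t+1}\cdot\tilde A\tilde x_t$ is already in the desired form; the task is to convert $\sum_t\tilde p_{t+1}\cdot(\tilde A x_t+\tilde v_t)$ into the matching second copy plus the $\mu\sum_{t=2}^{T}|C\tilde x_t|^2$ term.

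Next I would apply (\ref{eq:4-14}) to replace $\tilde A x_t+\tilde v_t$ by $\tilde x_{t+1}-A\tilde x_t$, so the problematic sum becomes $\sum_{t=1}^{T-1}\tilde p_{t+1}\cdot\tilde x_{t+1}-\sum_{t=1}^{T-1}A^*\tilde p_{t+1}\cdot\tilde x_t$. The adjoint recursion in (\ref{eq:4-14}) (read as $A^*\tilde p_{t+1}=\tilde p_t-(\tilde A)^*p_{t+1}-\mu C^*C\tilde x_t$) then turns the second piece into $\sum_{t=1}^{T-1}\bigl[\tilde p_t\cdot\tilde x_t-p_{t+1}\cdot\tilde A\tilde x_t-\mu|C\tilde x_t|^2\bigr]$. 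Subtracting the two sums telescopes against $\sum\tilde p_{t+1}\cdot\tilde x_{t+1}$ to leave only the boundary contribution $\tilde p_T\cdot\tilde x_T-\tilde p_1\cdot\tilde x_1$, which by $\tilde x_1=0$ and $\tilde p_T=\mu C^*C\tilde x_T$ equals $\mu|C\tilde x_T|^2$. Combining this boundary term with $\mu\sum_{t=1}^{T-1}|C\tilde x_t|^2$ and again using $\tilde x_1=0$ yields exactly $\mu\sum_{t=2}^{T}|C\tilde x_t|^2$, plus the extra copy of $\sum_{t=1}^{T-1}p_{t+1}\cdot\tilde A\tilde x_t$, giving (\ref{eq:4-15}).

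The only step that is not mechanical is the Abel summation with the two sets of boundary conditions: one must keep track of the fact that $\tilde p$ is a backward recursion with terminal data at $T$ while $\tilde x$ is a forward recursion with initial data at $1$, and that the ``mixed'' cross-term $(\tilde A)^*p_{t+1}$ in the adjoint equation is exactly what is needed to produce the second copy of $p_{t+1}\cdot\tilde A\tilde x_t$ required in the statement. Apart from carefully bookkeeping signs and index ranges, no further estimates are needed; the identity is algebraic.
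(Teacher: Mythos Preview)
Your proof is correct and follows essentially the same route as the paper: pair $D^2J(Z)\tilde Z$ from (\ref{eq:4-13}) with $\tilde Z$, then use the forward recursion for $\tilde x_t$ and the backward recursion for $\tilde p_t$ from (\ref{eq:4-14}) to perform a discrete integration by parts that eliminates $\tilde p$ and produces the boundary term $\mu|C\tilde x_T|^2$. The only cosmetic difference is that you first group $\tilde p_{t+1}\cdot\tilde A x_t$ and $\tilde p_{t+1}\cdot\tilde v_t$ into $\tilde p_{t+1}\cdot(\tilde x_{t+1}-A\tilde x_t)$, whereas the paper leaves them separate and lets the $\tilde A x_t$ cross-term cancel after substituting $\tilde v_t=\tilde x_{t+1}-A\tilde x_t-\tilde A x_t$; both are the same telescoping computation, and your version is in fact more explicit than the paper's.
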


\begin{proof}
From (\ref{eq:4-13}) we get

\[
<D^{2}J(Z)\tilde{Z},\tilde{Z}>=\text{tr }\left(\tilde{A}(\tilde{A})^{*}+\sum_{t=1}^{T-1}\tilde{p}_{t+1}x_{t}^{*}(\tilde{A})^{*}+\sum_{t=1}^{T-1}p_{t+1}(\tilde{x}_{t})^{*}(\tilde{A})^{*}\right)+
\]
\[
+\gamma\sum_{t-1}^{T-1}|\tilde{v}_{t}|^{2}+\sum_{t=1}^{T-1}\tilde{p}_{t+1}.\tilde{v}_{t}
\]
Using the system (\ref{eq:4-14}), we can compute the term $\sum_{t=1}^{T-1}\tilde{p}_{t+1}.\tilde{v}_{t}$
and after some rearrangements we derive formula (\ref{eq:4-15}) where
$\tilde{p}_{t+1}$ is absent. $\blacksquare$ 
\end{proof}
In the sequel we shall use the properties

\begin{equation}
|<D^{2}J(Z)\tilde{Z},\tilde{Z}>|\leq\varphi(||Z||)||\tilde{Z}||^{2}\label{eq:4-16}
\end{equation}

\begin{equation}
||DJ(Z)||\leq\psi(||Z||)\label{eq:4-17}
\end{equation}
where $\varphi(r),$$\psi(r)$ are continuous and monotone increasing
functions. These properties are consequences of formulas (\ref{eq:4-15})
and (\ref{eq:4-101}) and technical calculations, which we do not
detail. Since we are interested in minimizing $J(Z),$ we can from
(\ref{eq:4-81}) and (\ref{eq:4-84}) consider the ball

\begin{equation}
||Z||\leq M=\sqrt{\frac{\mu}{\min(1,\gamma)}\sum_{t=2}^{T}|y_{t}|^{2}}\label{eq:4-18}
\end{equation}
The gradient descent algorithm is defined by

\begin{equation}
Z^{n+1}=Z^{n}-\rho DJ(Z^{n})\label{eq:4-19}
\end{equation}

\[
J(Z^{1})\leq\dfrac{\mu}{2}\sum_{t=2}^{T}|y_{t}|^{2}\Rightarrow||Z^{1}||\leq M
\]
We can state the 
\begin{thm}
\label{theo4-1}We choose

\begin{equation}
\rho<\min(\frac{2}{\varphi(M+\psi(M))},1)\label{eq:4-20}
\end{equation}
then the sequence $J(Z^{n})$ is decreasing , $||Z^{n}||\leq M$ and
$DJ(Z^{n})\rightarrow0$ , as $n\rightarrow+\infty.$ So the limit
points of the sequence $Z^{n}$ are solutions of $DJ(\hat{Z})=0$. 
\end{thm}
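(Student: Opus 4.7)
The plan is to turn the bounds (\ref{eq:4-16}) and (\ref{eq:4-17}) into a single-step descent inequality of Armijo type and then run the usual telescoping argument, with the extra care that we must maintain the invariance $\|Z^n\|\le M$ along the iteration so that $\varphi(M+\psi(M))$ keeps controlling the second derivative.

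The starting point is the second-order Taylor expansion along the segment from $Z^n$ to $Z^{n+1}=Z^n-\rho DJ(Z^n)$:
\begin{equation*}
J(Z^{n+1})-J(Z^{n}) = -\rho\|DJ(Z^n)\|^2 + \rho^2\int_0^1(1-\theta)\,\langle D^2J(Z^n-\rho\theta DJ(Z^n))DJ(Z^n),DJ(Z^n)\rangle\,d\theta,
\end{equation*}
valid since Lemma \ref{lem4-2} guarantees that $D^2J$ exists. To dominate the remainder by (\ref{eq:4-16}) we need the entire segment to lie in a ball whose radius is known a priori.

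I would now carry out the central induction: $\|Z^n\|\le M$ for every $n$. The base case is the hypothesis on $Z^1$ in (\ref{eq:4-19}) combined with the coercivity bound $J(Z)\ge\frac{\min(1,\gamma)}{2}\|Z\|^2$, which is immediate from (\ref{eq:4-81}) and the definition of the norm (\ref{eq:4-85}). For the inductive step, assume $\|Z^n\|\le M$. By (\ref{eq:4-17}) and monotonicity of $\psi$, $\|DJ(Z^n)\|\le \psi(M)$, and since $\rho\le 1$ the whole segment satisfies $\|Z^n-\rho\theta DJ(Z^n)\|\le M+\psi(M)$. Monotonicity of $\varphi$ combined with (\ref{eq:4-16}) then yields
\begin{equation*}
J(Z^{n+1})-J(Z^n)\le \rho\Bigl(-1+\tfrac{\rho}{2}\varphi(M+\psi(M))\Bigr)\|DJ(Z^n)\|^2,
\end{equation*}
and by the choice (\ref{eq:4-20}) the prefactor equals $-\rho\eta$ for some $\eta>0$. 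In particular $J(Z^{n+1})\le J(Z^n)\le J(Z^1)\le \frac{\mu}{2}\sum_{t=2}^T|y_t|^2$, and the coercivity estimate together with the very definition (\ref{eq:4-18}) of $M$ forces $\|Z^{n+1}\|\le M$, closing the induction.

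Once this is in place the conclusion is routine: $J(Z^n)$ is non-increasing and bounded below (by $0$), hence convergent, so $\sum_n\|DJ(Z^n)\|^2<\infty$ and in particular $DJ(Z^n)\to 0$. The uniform bound $\|Z^n\|\le M$ gives a convergent subsequence $Z^{n_k}\to\hat Z$, and continuity of $DJ$ (a consequence of the existence of $D^2J$) yields $DJ(\hat Z)=0$, so every limit point is a critical point of $J$. The only delicate point, and the sole reason for the constraint $\rho<1$ in (\ref{eq:4-20}), is the bootstrap between the descent of $J$ and the invariance of the ball $\{\|Z\|\le M\}$: one has to choose $\rho$ small enough that the segment along which Taylor's remainder is evaluated stays inside the enlarged ball of radius $M+\psi(M)$ on which a single Lipschitz constant $\varphi(M+\psi(M))$ governs $D^2J$, and then use coercivity to turn the resulting descent back into the sublevel-set invariance that sustains the induction.
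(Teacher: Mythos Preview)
Your proof is correct and follows essentially the same approach as the paper: a second-order Taylor expansion along the gradient step, control of the segment via $\rho<1$ and (\ref{eq:4-17}), the bound (\ref{eq:4-16}) on the Hessian term, and an induction that couples the descent of $J$ with the invariance of the ball $\{\|Z\|\le M\}$ through coercivity. The only cosmetic differences are that the paper writes the remainder as a double integral $\int_0^1\int_0^1\theta\,\langle D^2J(\cdot)\ldots\rangle\,d\lambda\,d\theta$ rather than $\int_0^1(1-\theta)\langle\ldots\rangle\,d\theta$, and deduces $DJ(Z^n)\to 0$ from $J(Z^n)-J(Z^{n+1})\to 0$ rather than from the summability $\sum_n\|DJ(Z^n)\|^2<\infty$; both are equivalent.
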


\begin{proof}
We use the formulas

\[
J(Z^{n+1})-J(Z^{n})=-\rho\int_{0}^{1}<DJ(Z^{n}-\rho\theta DJ(Z^{n})),DJ(Z^{n})>d\theta=
\]

\begin{equation}
=-\rho||DJ(Z^{n})||^{2}+\rho^{2}\int_{0}^{1}\int_{0}^{1}\theta<D^{2}J(Z^{n}-\rho\theta\lambda DJ(Z^{n}))DJ(Z^{n}),DJ(Z^{n})>d\lambda d\theta\label{eq:4-21}
\end{equation}
So

\[
J(Z^{n})-J(Z^{n+1})=\rho||DJ(Z^{n})||^{2}-\rho^{2}\int_{0}^{1}\int_{0}^{1}\theta<D^{2}J(Z^{n}-\rho\theta\lambda DJ(Z^{n}))DJ(Z^{n}),DJ(Z^{n})>d\lambda d\theta
\]
Suppose$J(Z^{n})\leq\dfrac{\mu}{2}\sum_{t=2}^{T}|y_{t}|^{2}\Rightarrow$
$||Z^{n}||<M,$ then , from (\ref{eq:4-17}) we have $||DJ(Z^{n})||\leq\psi(M)$
and

\[
||Z^{n}-\rho\theta\lambda DJ(Z^{n})||\leq M+\rho\psi(M)\leq M+\psi(M)
\]
Therefore, from (\ref{eq:4-16}) we get

\[
|<D^{2}J(Z^{n}-\rho\theta\lambda DJ(Z^{n}))DJ(Z^{n}),DJ(Z^{n})>|\leq\varphi(M+\psi(M))||DJ(Z^{n})||^{2}
\]
So

\begin{equation}
J(Z^{n})-J(Z^{n+1})\geq(\rho-\frac{\rho^{2}}{2}\varphi(M+\psi(M)))||DJ(Z^{n})||^{2}\label{eq:4-22}
\end{equation}
Choosing $\rho$ as in (\ref{eq:4-20}) the number $(\rho-\frac{\rho^{2}}{2}\varphi(M+\psi(M)))>0.$
Therefore $J(Z^{n+1})<J(Z^{n})<\dfrac{\mu}{2}\sum_{t=2}^{T}|y_{t}|^{2}\Rightarrow||Z^{n+1}||\leq M.$
We can iterate, and conclude that the sequence $J(Z^{n})$ is monotone
decreasing. It follows that $J(Z^{n})<\dfrac{\mu}{2}\sum_{t=2}^{T}|y_{t}|^{2},\forall n$
and $||Z^{n}||<M,\forall n.$ Looking at the inequality (\ref{eq:4-22})
we get , from the convergence of the sequence $J(Z^{n}),$ that $J(Z^{n})-J(Z^{n+1})\rightarrow0$
hence $||DJ(Z^{n})||\rightarrow0.$ From the continuity of the gradient,
the statement of the Theorem follows. $\blacksquare$ 
\end{proof}
We can detail the steepest gradient. Namely

\begin{equation}
A^{n+1}=A^{n}-\rho(A^{n}++\sum_{t=1}^{T-1}p_{t+1}^{n}(x_{t}^{n})^{*})\label{eq:4-23}
\end{equation}

\[
v_{t}^{n+1}=v_{t}^{n}-\rho(\gamma v_{t}^{n}+p_{t+1}^{n}),\:t=1,\cdots T-1
\]
with

\begin{equation}
x_{t+1}^{n}=A^{n}x_{t}^{n}+v_{t}^{n},\:t=1,\cdots,T-1\label{eq:4-24}
\end{equation}
\[
x_{1}^{n}=x
\]

\begin{equation}
p_{t}^{n}=(A^{n})^{*}p_{t+1}^{n}-\mu C^{*}(y_{t}-Cx_{t}^{n}),\:t=1,\cdots T-1\label{eq:4-25}
\end{equation}

\[
p_{T}^{n}=-\mu C^{*}(y_{T}-Cx_{T}^{n})
\]

\begin{rem}
\label{rem4-1}The algorithm (\ref{eq:4-23}),(\ref{eq:4-24}),(\ref{eq:4-25})
is the straightforward application of the gradient descent method
to the function $J(Z).$ One of the difficulties is to estimate the
bound (\ref{eq:4-20}). 
\end{rem}

\section{SPECIFIC DESCENT METHOD}

\subsection{METHOD }

We exploit here some specific aspects of our optimization problem.
Turning to (\ref{eq:4-11}), (\ref{eq:4-12}) , we write also

\begin{equation}
\hat{A}(\dfrac{I}{\gamma}+\sum_{t=1}^{T-1}\hat{x}_{t}(\hat{x}_{t})^{*})=\sum_{t=1}^{T-1}\hat{x}_{t+1}(\hat{x}_{t})^{*}\label{eq:5-100}
\end{equation}

\begin{equation}
\hat{x}_{t+1}-\hat{A}\hat{x}_{t}+\dfrac{\hat{p}_{t+1}}{\gamma}=0,\:t=1,\cdots T-1,\:\hat{x}_{1}=x\label{eq:5-101}
\end{equation}
\[
\hat{p}_{t}=(\hat{A})^{*}\hat{p}_{t+1}-\mu C^{*}(y_{t}-C\hat{x}_{t}),t=1,\cdots T-1,\:\hat{p}_{T}=-\mu C^{*}(y_{T}-C\hat{x}_{T})
\]
Considering $\hat{A}$ given in the system (\ref{eq:5-101}) we obtain
a unique pair $\hat{x}_{t},\hat{p}_{t},$ since (\ref{eq:5-101})
is the Euler condition of a standard linear quadratic control problem.
We can formulate it as a problem of calculus of variations

\begin{equation}
\min_{x_{2},\cdots,x_{T}}K_{x}(\hat{A},x_{2},\cdots,x_{T})\label{eq:5-102}
\end{equation}
with

\begin{equation}
K_{x}(\hat{A},x_{2},\cdots,x_{T})=\dfrac{\gamma}{2}\sum_{t=1}^{T-1}|x_{t+1}-\hat{A}x_{t}|^{2}+\dfrac{\mu}{2}\sum_{t=2}^{T}|y_{t}-Cx_{t}|^{2},\;x_{1}=x\label{eq:5-103}
\end{equation}
On the other hand, when $\hat{x}_{t}$ is given , with $\hat{x}_{1}=x$,
then $\hat{A}$ defined by (\ref{eq:5-100}) minimizes the function

\begin{equation}
\min_{A}L(A,\hat{x}_{2},\cdots,\hat{x}_{T})\label{eq:5-104}
\end{equation}
with

\begin{equation}
L(A,\hat{x}_{2},\cdots,\hat{x}_{T})=\dfrac{1}{2}\text{tr }AA^{*}+\dfrac{\gamma}{2}\sum_{t=1}^{T-1}|\hat{x}_{t+1}-A\hat{x}_{t}|^{2},\:\hat{x}_{1}=x\label{eq:5-105}
\end{equation}
So $\hat{A}$ appears as the solution of a fixed point problem. We
exploit this fact in designing the algorithm. We define a sequence
$A^{n}$ as follows . For $A^{n}$ given, we define $x_{t}^{n},t=2,\cdots,T$
by minimizing $K_{x}(A^{n},,x_{2},\cdots,x_{T})$ in $x_{2},\cdots,x_{T}.$
We then define $A^{n+1},$by minimizing a modification of $L(A,x_{2}^{n},\cdots,x_{T}^{n})$,
namely

\begin{equation}
L_{\rho}(A,x_{2}^{n},\cdots,x_{T}^{n})=\dfrac{\rho+1}{2}\text{tr }AA^{*}-\rho\text{tr }A^{n}A^{^{*}}+\dfrac{\gamma}{2}\sum_{t=1}^{T-1}|x_{t+1}^{n}-Ax_{t}^{n}|^{2}\label{eq:5-106}
\end{equation}
The parameter $\rho$ is positive. Finally the sequence $A^{n}$ is
defined by

\begin{equation}
x_{t+1}^{n}-A^{n}x_{t}^{n}+\dfrac{p_{t+1}^{n}}{\gamma}=0,\:t=1,\cdots T-1,\:x_{1}^{n}=x\label{eq:5-107}
\end{equation}

\[
p_{t}^{n}=(A^{n})^{*}p_{t+1}^{n}-\mu C^{*}(y_{t}-Cx_{t}^{n}),\;p_{T}^{n}=-\mu C^{*}(y_{T}-Cx_{T}^{n})
\]
\begin{equation}
A^{n+1}(\dfrac{\rho+1}{\gamma}I+\sum_{t=1}^{T-1}x_{t}^{n}(x_{t}^{n})^{*})=\dfrac{\rho}{\gamma}A^{n}+\sum_{t=1}^{T-1}x_{t+1}^{n}(x_{t}^{n})^{*}\label{eq:5-108}
\end{equation}

\subsection{CONVERGENCE}

We have the following convergence result 
\begin{thm}
\label{theo5-1}Assume $\rho\geq0,$ then the sequence $J(A^{n},x^{n}(.))$
( see (\ref{eq:4-8})) is monotone decreasing. The sequence $A^{n},x^{n}(.),p^{n}(.)$
is bounded , $A^{n+1}-A^{n}\rightarrow0$ and limits of converging
subesquences of $A^{n},x^{n}(.),p^{n}(.)$ are solutions of (\ref{eq:5-100}),
(\ref{eq:5-101}). 
\end{thm}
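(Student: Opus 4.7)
The plan is to view the algorithm as a block-coordinate descent on $J(A,x(.))$ in which the $x$-update is exact minimization and the $A$-update is a proximal step. First I would rewrite the $A$-step objective by completing the square:
\[
L_{\rho}(A,x^{n}) = L(A,x^{n}) + \frac{\rho}{2}\,\text{tr}\bigl((A-A^{n})(A-A^{n})^{*}\bigr) - \frac{\rho}{2}\,\text{tr}(A^{n}(A^{n})^{*}),
\]
which exhibits $L_{\rho}(\cdot,x^{n})$ as a proximal perturbation of $L(\cdot,x^{n})$. Since $\tfrac{1}{2}\text{tr}(AA^{*})$ contributes $1$-strong convexity and the proximal term contributes $\rho$, the functional $L_{\rho}(\cdot,x^{n})$ is $(\rho+1)$-strongly convex in $A$, and the minimality of $A^{n+1}$ yields
\[
L(A^{n},x^{n}) - L(A^{n+1},x^{n}) \geq \frac{2\rho+1}{2}\,\text{tr}\bigl((A^{n+1}-A^{n})(A^{n+1}-A^{n})^{*}\bigr).
\]

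Next I would add the $A$-independent quantity $\tfrac{\mu}{2}\sum_{t=2}^{T}|y_{t}-Cx_{t}^{n}|^{2}$ to both sides, turning the left-hand side into $J(A^{n},x^{n})-J(A^{n+1},x^{n})$. The $x$-update is exact minimization of $K_{x}(A^{n+1},\cdot)$, which differs from $J(A^{n+1},\cdot)$ only by an $x$-independent constant, so $J(A^{n+1},x^{n+1}) \leq J(A^{n+1},x^{n})$. Chaining the two inequalities produces the descent estimate
\[
J(A^{n+1},x^{n+1}) + \frac{2\rho+1}{2}\,\text{tr}\bigl((A^{n+1}-A^{n})(A^{n+1}-A^{n})^{*}\bigr) \leq J(A^{n},x^{n}).
\]
Since $J \geq 0$, a telescoping sum delivers monotone decrease of $J(A^{n},x^{n})$ together with $\sum_{n}\text{tr}((A^{n+1}-A^{n})(A^{n+1}-A^{n})^{*}) < \infty$, hence $A^{n+1}-A^{n} \to 0$ for every $\rho \geq 0$.

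Boundedness is then routine: the bound $J(A^{n},x^{n}) \leq J(A^{1},x^{1}) \leq \tfrac{\mu}{2}\sum_{t}|y_{t}|^{2}$ available from (\ref{eq:4-84}) controls $\text{tr}(A^{n}(A^{n})^{*})$ and $\sum_{t}|v_{t}^{n}|^{2}$ for $v_{t}^{n}=x_{t+1}^{n}-A^{n}x_{t}^{n}$; forward induction starting from $x_{1}^{n}=x$ then bounds $x_{t}^{n}$, and the backward recursion in (\ref{eq:5-107}), driven by bounded data and a bounded $A^{n}$, bounds $p_{t}^{n}$. To identify the limits, I would extract a convergent subsequence with $A^{n_{k}}\to\hat A$, $x_{t}^{n_{k}}\to\hat x_{t}$, $p_{t}^{n_{k}}\to\hat p_{t}$; since $A^{n_{k}+1}-A^{n_{k}}\to 0$, also $A^{n_{k}+1}\to\hat A$. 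Passing to the limit in the forward-backward system (\ref{eq:5-107}) gives (\ref{eq:5-101}), and in the normal equation (\ref{eq:5-108}) the $\tfrac{\rho}{\gamma}A^{n_{k}}$ term on the right and the corresponding contribution on the left both tend to $\tfrac{\rho}{\gamma}\hat A$ and cancel, leaving (\ref{eq:5-100}).

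The hard part is precisely the descent inequality at the $A$-step: because $A^{n+1}$ minimizes the proximally modified functional $L_{\rho}(\cdot,x^{n})$ rather than $L(\cdot,x^{n})$ itself, a naive alternating-minimization argument does not apply. The proximal identity together with the built-in strong convexity of $\tfrac{1}{2}\text{tr}(AA^{*})$ is what transfers optimality of $A^{n+1}$ into a quantitative decrease of $J$ with $\text{tr}((A^{n+1}-A^{n})(A^{n+1}-A^{n})^{*})$ as a free gain; propagation of bounds and continuity in the limit passage are then standard. $\blacksquare$
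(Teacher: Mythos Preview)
Your proof is correct and follows essentially the same strategy as the paper: derive the descent inequality $J(A^{n},x^{n})-J(A^{n+1},x^{n+1})\geq(\rho+\tfrac{1}{2})\,\text{tr}\bigl((A^{n+1}-A^{n})(A^{n+1}-A^{n})^{*}\bigr)$ by combining the exact $x$-minimization with the $A$-update, then use it to get monotonicity, boundedness, $A^{n+1}-A^{n}\to 0$, and pass to the limit in (\ref{eq:5-107})--(\ref{eq:5-108}). The only differences are cosmetic---you frame the $A$-step as a proximal update and invoke $(\rho+1)$-strong convexity, whereas the paper writes out the exact quadratic Taylor expansions of $L_{\rho}$ and $K_{x}$ about their minimizers and adds them (their equality (\ref{eq:5-111}) in fact omits the extra nonnegative term $\tfrac{\gamma}{2}\sum_{t}|(A^{n+1}-A^{n})x_{t}^{n}|^{2}$, which your inequality safely absorbs); one nitpick is that (\ref{eq:4-84}) bounds $\inf J$, not $J(A^{1},x^{1})$, but any finite initial value suffices for your boundedness argument.
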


\begin{proof}
We first compute $K_{x}(A^{n+1},,x_{2}^{n},\cdots,x_{T}^{n})-K_{x}(A^{n+1},,x_{2}^{n+1},\cdots,x_{T}^{n+1})>0$
, since $x_{2}^{n+1},\cdots,x_{T}^{n+1}$ minimizes $K_{x}(A^{n+1},,x_{2},\cdots,x_{T}).$
Since it is a quadratic function, we get easily

\begin{equation}
K_{x}(A^{n+1},,x_{2}^{n},\cdots,x_{T}^{n})-K_{x}(A^{n+1},,x_{2}^{n+1},\cdots,x_{T}^{n+1})=\dfrac{\gamma}{2}\sum_{t=1}^{T-1}|x_{t+1}^{n}-x_{t+1}^{n+1}-A^{n+1}(x_{t}^{n}-x_{t}^{n+1})|^{2}+\label{eq:5-109}
\end{equation}

\[
+\dfrac{\mu}{2}\sum_{t=2}^{T}|C(x_{t}^{n}-x_{t}^{n+1})|^{2}
\]
Similarly

\begin{equation}
L_{\rho}(A^{n},x_{2}^{n},\cdots,x_{T}^{n})-L_{\rho}(A^{n+1},x_{2}^{n},\cdots,x_{T}^{n})=\dfrac{\rho+1}{2}\text{tr}(A^{n+1}-A^{n})(A^{n+1}-A^{n})^{*}+\dfrac{\gamma}{2}\sum_{t=1}^{T-1}|(A^{n+1}-A^{n})x_{t}^{n}|^{2}\label{eq:5-110}
\end{equation}
The relation (\ref{eq:5-110}) yields

\begin{equation}
\dfrac{1}{2}\text{tr }A^{n}(A^{n})^{*}+\dfrac{\gamma}{2}\sum_{t=1}^{T-1}|x_{t+1}^{n}-A^{n}x_{t}^{n}|^{2}=\dfrac{1}{2}\text{tr }A^{n+1}(A^{n+1})^{*}+\dfrac{\gamma}{2}\sum_{t=1}^{T-1}|x_{t+1}^{n}-A^{n+1}x_{t}^{n}|^{2}+\label{eq:5-111}
\end{equation}

\[
+(\rho+\dfrac{1}{2})\text{tr}(A^{n+1}-A^{n})(A^{n+1}-A^{n})^{*}
\]
and (\ref{eq:5-109}) yields

\begin{equation}
\dfrac{\gamma}{2}\sum_{t=1}^{T-1}|x_{t+1}^{n}-A^{n+1}x_{t}^{n}|^{2}+\dfrac{\mu}{2}\sum_{t=2}^{T}|y_{t}-Cx_{t}^{n}|^{2}=\dfrac{\gamma}{2}\sum_{t=1}^{T-1}|x_{t+1}^{n+1}-A^{n+1}x_{t}^{n+1}|^{2}+\dfrac{\mu}{2}\sum_{t=2}^{T}|y_{t}-Cx_{t}^{n+1}|^{2}\label{eq:5-112}
\end{equation}

\[
+\dfrac{\gamma}{2}\sum_{t=1}^{T-1}|x_{t+1}^{n}-x_{t+1}^{n+1}-A^{n+1}(x_{t}^{n}-x_{t}^{n+1})|^{2}+\dfrac{\mu}{2}\sum_{t=2}^{T}|C(x_{t}^{n}-x_{t}^{n+1})|^{2}
\]
Adding (\ref{eq:5-111}) and (\ref{eq:5-112}) we obtain

\begin{equation}
\dfrac{1}{2}\text{tr }A^{n}(A^{n})^{*}+\dfrac{\gamma}{2}\sum_{t=1}^{T-1}|x_{t+1}^{n}-A^{n}x_{t}^{n}|^{2}+\dfrac{\mu}{2}\sum_{t=2}^{T}|y_{t}-Cx_{t}^{n}|^{2}=\dfrac{1}{2}\text{tr }A^{n+1}(A^{n+1})^{*}+\dfrac{\gamma}{2}\sum_{t=1}^{T-1}|x_{t+1}^{n+1}-A^{n+1}x_{t}^{n+1}|^{2}+\label{eq:5-113}
\end{equation}

\[
+\dfrac{\mu}{2}\sum_{t=2}^{T}|y_{t}-Cx_{t}^{n+1}|^{2}+(\rho+\dfrac{1}{2})\text{tr}(A^{n+1}-A^{n})(A^{n+1}-A^{n})^{*}+\dfrac{\gamma}{2}\sum_{t=1}^{T-1}|x_{t+1}^{n}-x_{t+1}^{n+1}-A^{n+1}(x_{t}^{n}-x_{t}^{n+1})|^{2}+
\]

\[
+\dfrac{\mu}{2}\sum_{t=2}^{T}|C(x_{t}^{n}-x_{t}^{n+1})|^{2}
\]
It follows that the sequence $\dfrac{1}{2}\text{tr }A^{n}(A^{n})^{*}+\dfrac{\gamma}{2}\sum_{t=1}^{T-1}|x_{t+1}^{n}-A^{n}x_{t}^{n}|^{2}+\dfrac{\mu}{2}\sum_{t=2}^{T}|y_{t}-Cx_{t}^{n}|^{2}$
is decreasing and thus convergente. From (\ref{eq:5-113}) we get
that $A^{n+1}-A^{n}\rightarrow0.$Clearly the sequences $A^{n}$ and
$x_{t}^{n}$ are bounded. From the second relation (\ref{eq:5-107}),
the sequence $p_{t}^{n}$ is also bounded. If we extract a converging
subsequence, the limit is a soltion of the system (\ref{eq:5-100}),
(\ref{eq:5-101}). This concludes the proof 
\end{proof}

\subsection{DUALITY }

In (\ref{eq:4-12}) we replace $\hat{A}$ by its value coming from
(\ref{eq:4-11}). We obtain

\begin{equation}
\hat{x}_{t+1}+\sum_{s-1}^{T-1}\hat{p}_{s+1}\hat{x}_{s}.\hat{x}_{t}+\dfrac{\hat{p}_{t+1}}{\gamma}=0,\:t=1,\cdots T-1,\:\hat{x}_{1}=x\label{eq:5-114}
\end{equation}
\[
\hat{p}_{t}=-\sum_{s=1}^{T-1}\hat{x}_{s}\hat{p}_{s+1}.\hat{p}_{t+1}-\mu C^{*}(y_{t}-C\hat{x}_{t}),t=1,\cdots T-1,\:\hat{p}_{T}=-\mu C^{*}(y_{T}-C\hat{x}_{T})
\]
The unknowns are the pair $\hat{x}_{t},\hat{p}_{t},$$t=1,\cdots T.$
The first one is linear in $\hat{p}(.)$ and the second one is linear
in $\hat{x}(.).$ We can interpret the first equation as the Euler
equation for the the optimization of the functional

\begin{equation}
K(q(.))=\dfrac{1}{2\gamma}\sum_{t=1}^{T-1}|q_{t}|^{2}+\dfrac{1}{2}\sum_{t,s=1}^{T-1}\hat{x}_{t}.\hat{x}_{s}q_{s}.q_{t}+\sum_{t=1}^{T-1}\hat{x}_{t+1}.q_{t}\label{eq:5-115}
\end{equation}
and $\hat{p}_{t+1},t=1,\cdots,T-1$ attains the minimal value of $K(q).$
Unfortunately, this observation is not very useful, since we do not
know the vectors $\hat{x}_{t}$. One can think, of course, of using
the linear system , described by the second equation (\ref{eq:5-114})
to obtain the vectors $\hat{x}_{t}$ , but this system is not immediately
well posed. So , it is not clear how to design an iteration for the
pair of equations (\ref{eq:5-114}). Another possibility to introduce
duality is to consider the dual problem of $K_{x}(\hat{A},x_{2},\cdots,x_{T}).$
It consists in considering $\hat{p}_{t}$ as a state and $\hat{x}_{t}$
as an adjoint state. We can consider indeed the following control
problem.

The evolution of the system is described by the following backward
dynamics: The control is a sequence $z_{2},\cdots z_{T}$ of vectors
in $R^{d},$and we state

\begin{equation}
q_{T}=-\mu C^{*}y_{T}+C^{*}z_{T}\label{eq:5-116}
\end{equation}

\[
q_{t}=(\hat{A})^{*}q_{t+1}-\mu C^{*}y_{t}+C^{*}z_{t},\:t=T-1,\cdots2
\]
\[
q_{1}=(\hat{A})^{*}q_{2}-\mu C^{*}y_{2}+\mu C^{*}Cx
\]
and we minimize the functional

\begin{equation}
\mathcal{K}(z(.))=-q_{1}.x+\dfrac{1}{2\gamma}\sum_{t=2}^{T}|q_{t}|^{2}+\dfrac{1}{2\mu}\sum_{t=2}^{T}|z_{t}|^{2}\label{eq:5-117}
\end{equation}
then the solution is $z_{t}=\mu C\hat{x}_{t}$ and the optimal state
is $\hat{p}_{t}.$ We can then design the following algoritm . Assuming
$A^{n}$ known, we obtain $x_{t}^{n},t=2,\cdots,T$ by minimizing
$K_{x}(A^{n},,x_{2},\cdots,x_{T})$ in $x_{2},\cdots,x_{T}.$ We then
obtain $p_{t}^{n}$ by minimizing the functional $\mathcal{K}(A^{n},z(.))$
defined by the following relations

\begin{equation}
q_{T}=-\mu C^{*}y_{T}+C^{*}z_{T}\label{eq:5-118}
\end{equation}

\[
q_{t}=(A^{n})^{*}q_{t+1}-\mu C^{*}y_{t}+C^{*}z_{t},\:t=T-1,\cdots2
\]
\[
q_{1}=(A^{n})^{*}q_{2}-\mu C^{*}y_{2}+\mu C^{*}Cx
\]
and

\begin{equation}
\mathcal{K}(A^{n},z(.))=-q_{1}.x+\dfrac{1}{2\gamma}\sum_{t=2}^{T}|q_{t}|^{2}+\dfrac{1}{2\mu}\sum_{t=2}^{T}|z_{t}|^{2}\label{eq:5-119}
\end{equation}
Then , we can define $A^{n+1}$ by the formula

\begin{equation}
A^{n+1}=-\sum_{t=1}^{T-1}p_{t+1}^{n}(x_{t}^{n})^{*}\label{eq:5-120}
\end{equation}
This algorithm is different from (\ref{eq:5-108}) (with $\rho=0).$
In fact, it corresponds to

\begin{equation}
A^{n+1}=-\gamma(x_{t+1}^{n}-A^{n}x_{t}^{n})(x_{t}^{n})^{*}\label{eq:5-121}
\end{equation}
We do not claim convergence of this algorithm

\subsection{RECURSIVITY }

We consider now the dependence in $T$. We use the notation

\begin{equation}
A(\dfrac{I}{\gamma}+\sum_{t=1}^{T-1}x_{t}(x_{t})^{*})=\sum_{t=1}^{T-1}x_{t+1}(x_{t})^{*}\label{eq:5-10}
\end{equation}

\begin{equation}
x_{t+1}-Ax_{t}+\dfrac{p_{t+1}}{\gamma}=0,\:t=1,\cdots T-1,\:x_{1}=x\label{eq:5-11}
\end{equation}
\[
p_{t}=(A)^{*}p_{t+1}-\mu C^{*}(y_{t}-Cx_{t}),t=1,\cdots T-1,\:p_{T}=-\mu C^{*}(y_{T}-Cx_{T})
\]
The dependence in $T$ can be emphasaized with the notation $A^{T},x_{t}^{T},p_{t}^{T}.$
To obtain resursive formulas, it is essential to rely on classical
results of control theory, which decouple the forward-backward system
of equations (\ref{eq:5-10}),(\ref{eq:5-11}). In fact, a linear
relation holds

\begin{equation}
x_{t}=r_{t}-\Sigma_{t}p_{t}\label{eq:5-4}
\end{equation}
By well known calculations we have the formulas

\begin{equation}
\Sigma_{t+1}=A\Sigma_{t}A^{*}+\dfrac{I}{\gamma}-A\Sigma_{t}C^{*}(C\Sigma_{t}C^{*}+\dfrac{I}{\mu})^{-1}C\Sigma_{t}A^{*}\label{eq:5-5}
\end{equation}

\[
\Sigma_{1}=0
\]

\begin{equation}
r_{t+1}=Ar_{t}+A\Sigma_{t}C^{*}(C\Sigma_{t}C^{*}+\dfrac{I}{\mu})^{-1}(y_{t}-Cr_{t})\label{eq:5-6}
\end{equation}
\[
r_{1}=x
\]
and then the sequence $p_{t}$ is defined by

\begin{equation}
p_{t}=(I+\mu C^{*}C\Sigma_{t})^{-1}\left(A^{*}p_{t+1}-\mu C^{*}(y_{t}-Cr_{t})\right)\label{eq:5-7}
\end{equation}
\[
p_{T}=-\mu(I+\mu C^{*}C\Sigma_{T})^{-1}C^{*}(y_{T}-Cr_{T})
\]
In the calculations, we have used the fact that $\Sigma_{t}$ is symmetric
and we have the relation

\begin{equation}
(I+\mu C^{*}C\Sigma_{t})^{-1}=I-C^{*}(C\Sigma_{t}C^{*}+\dfrac{I}{\mu})^{-1}C\Sigma_{t}\label{eq:5-8}
\end{equation}
The important point is that $\Sigma_{t},r_{t}$ do not depend on $T.$
Reinstating the notation $T$, we have the formulas

\begin{equation}
p_{t}^{T}=(I+\mu C^{*}C\Sigma_{t})^{-1}\left((A^{T})^{*}p_{t+1}^{T}-\mu C^{*}(y_{t}-Cr_{t})\right),t=1,\cdots T-1\label{eq:5-9}
\end{equation}
\[
p_{T}^{T}=-\mu(I+\mu C^{*}C\Sigma_{T})^{-1}C^{*}(y_{T}-Cr_{T})
\]

\begin{equation}
A^{T}=-\sum_{t=1}^{T-1}p_{t+1}^{T}(r_{t}-\Sigma_{t}p_{t}^{T})^{*}\label{eq:5-12}
\end{equation}
We write

\begin{equation}
A^{T,T+1}=A^{T+1}-A^{T}\label{eq:5-13}
\end{equation}
\[
p_{t}^{T,T+1}=p_{t}^{T+1}-p_{t}^{T},t=1,\cdots T
\]
then , we get the formulas

\begin{equation}
A^{T,T+1}=-p_{T+1}^{T+1}r_{T}^{*}+p_{T+1}^{T+1}(p_{T}^{T+1})^{*}\Sigma_{T}+\label{eq:5-14}
\end{equation}

\[
-\sum_{t=1}^{T-1}p_{t}^{T,T+1}r_{t}^{*}+\sum_{t=1}^{T-1}p_{t+1}^{T,T+1}(p_{t}^{T})^{*}\Sigma_{t}+\sum_{t=1}^{T-1}p_{t+1}^{T}(p_{t}^{T,T+1})^{*}\Sigma_{t}+\sum_{t=1}^{T-1}p_{t+1}^{T,T+1}(p_{t}^{T,T+1})^{*}
\]

\begin{equation}
p_{t}^{T,T+1}=(I+\mu C^{*}C\Sigma_{t})^{-1}\left((A^{T,T+1})^{*}p_{t+1}^{T}+(A^{T})^{*}p_{t+1}^{T,T+1}+(A^{T,T+1})^{*}p_{t+1}^{T,T+1}\right),t=1,\cdots T-1\label{eq:5-15}
\end{equation}

\[
p_{T}^{T,T+1}=(I+\mu C^{*}C\Sigma_{T})^{-1}(A^{T}+A^{T,T+1})^{*}p_{T+1}^{T+1}
\]
We obtain recursivity , but at the price of complex equations.

\subsection{ASYMPTOTIC ANALYSIS}

We take $\mu=\gamma$ and emphasize the dependence in $\gamma$ as
follows:

\begin{equation}
J_{\gamma}(A,x(.))=\frac{1}{2}\text{tr}(AA^{*})+\frac{\gamma}{2}\sum_{t=1}^{T-1}|x_{t+1}-Ax_{t}|^{2}+\dfrac{\gamma}{2}\sum_{t=2}^{T}|y_{t}-Cx_{t}|^{2}\label{eq:5-16}
\end{equation}
and the Euler necessary conditions of optimality

\begin{equation}
A^{\gamma}=-\sum_{t=1}^{T-1}p_{t+1}^{\gamma}(x_{t}^{\gamma})^{*}=0\label{eq:5-17}
\end{equation}

\begin{equation}
x_{t+1}^{\gamma}-A^{\gamma}x_{t}^{\gamma}+\dfrac{p_{t+1}^{\gamma}}{\gamma}=0,\:t=1,\cdots T-1,\:x_{1}^{\gamma}=x\label{eq:5-18}
\end{equation}
\[
p_{t}^{\gamma}=(A^{\gamma})^{*}p_{t+1}^{\gamma}-\gamma C^{*}(y_{t}-Cx_{t}^{\gamma}),t=1,\cdots T-1,\:p_{t}^{\gamma}=-\gamma C^{*}(y_{T}-Cx_{T}^{\gamma})
\]
We want to study the behavior of these quantities as $\gamma\rightarrow+\infty.$
We assume the existence of a matrix $\bar{A}$ such that

\begin{equation}
y_{t}=C\bar{x}_{t}\label{eq:5-19}
\end{equation}

\[
\bar{x}_{t+1}=\bar{A}\bar{x}_{t},\:\bar{x}_{1}=x
\]
We first state the 
\begin{prop}
\label{prop5-1}Assume (\ref{eq:5-19}). Let $A^{\gamma},x^{\gamma}(.)$
be a minimum of $J_{\gamma}(A,x(.))$, then as $\gamma\rightarrow+\infty,$$A^{\gamma}$
converges towards the element $\bar{A}$ satisfying (\ref{eq:5-19})
of minimum norm. 
\end{prop}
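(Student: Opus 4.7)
The plan is to mimic the argument of Proposition \ref{prop3-2}, using the assumption (\ref{eq:5-19}) to build a comparison competitor. Since $\bar{A}$ satisfies $\bar{x}_{t+1}=\bar{A}\bar{x}_t$ and $y_t=C\bar{x}_t$, the pair $(\bar{A},\bar{x}(\cdot))$ makes both penalty sums in $J_\gamma$ vanish, so $J_\gamma(\bar{A},\bar{x}(\cdot))=\tfrac{1}{2}\operatorname{tr}(\bar{A}\bar{A}^{*})$. Using that $(A^\gamma,x^\gamma(\cdot))$ is a minimizer, I obtain for every $\bar{A}$ verifying (\ref{eq:5-19}) the master inequality
\begin{equation*}
\tfrac{1}{2}\operatorname{tr}(A^\gamma(A^\gamma)^{*})+\tfrac{\gamma}{2}\sum_{t=1}^{T-1}|x^\gamma_{t+1}-A^\gamma x^\gamma_t|^{2}+\tfrac{\gamma}{2}\sum_{t=2}^{T}|y_t-Cx^\gamma_t|^{2}\;\leq\;\tfrac{1}{2}\operatorname{tr}(\bar{A}\bar{A}^{*}).
\end{equation*}
This single inequality will do all the work.

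From it I read off three facts: $A^\gamma$ is bounded uniformly in $\gamma$; the dynamical residuals $|x^\gamma_{t+1}-A^\gamma x^\gamma_t|^{2}$ tend to $0$ like $1/\gamma$; and the observation residuals $|y_t-Cx^\gamma_t|^{2}$ also tend to $0$ like $1/\gamma$. Since $x^\gamma_1=x$ is fixed and $A^\gamma$ is bounded, an induction on $t$ shows that $x^\gamma_t$ is bounded as well. Extracting a subsequence along which $A^\gamma\to A^{*}$, the relation $x^\gamma_{t+1}-A^\gamma x^\gamma_t\to 0$ inductively forces $x^\gamma_t\to (A^{*})^{t-1}x=:x^{*}_t$, and then $y_t=Cx^{*}_t=C(A^{*})^{t-1}x$. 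Thus $A^{*}$ is itself a matrix of the type (\ref{eq:5-19}).

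Passing to the limit in the master inequality (the two penalty terms being nonnegative are dropped) gives $\tfrac{1}{2}\operatorname{tr}(A^{*}(A^{*})^{*})\leq\tfrac{1}{2}\operatorname{tr}(\bar{A}\bar{A}^{*})$ for every $\bar{A}$ satisfying (\ref{eq:5-19}). Therefore $A^{*}$ realizes the minimum of the Frobenius norm on the admissible set, and in particular every limit point of $A^\gamma$ has the same norm as the minimum-norm element. The usual argument then upgrades subsequential convergence to convergence of the full sequence.

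The delicate point, and the main obstacle, is uniqueness of the minimum-norm element: unlike the setting of Proposition \ref{prop3-2}, where the constraint $\bar{A}x_t=x_{t+1}$ is affine in $\bar{A}$ for pre\-scribed data, here the admissible set $\{\bar{A}:C\bar{A}^{t-1}x=y_t,\;t=1,\dots,T\}$ is defined by polynomial relations and need not be convex. I would handle this by observing that any limit point $A^{*}$ produces the same state trajectory $x^{*}_t=(A^{*})^{t-1}x$ (because this trajectory is forced by the vanishing residuals together with $x_1=x$, so it is determined by the data in the limit), and once the $x^{*}_t$ are fixed the linear constraint $A^{*}x^{*}_t=x^{*}_{t+1}$ together with minimizing the (strictly convex) Frobenius norm uniquely determines $A^{*}$ as the orthogonal projection onto that affine subspace. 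This yields the uniqueness needed to conclude that the whole family $A^\gamma$ converges to this element.
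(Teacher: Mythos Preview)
Your core argument---compare against the competitor $(\bar A,\bar x(\cdot))$, deduce the master inequality, extract bounded subsequences, and show that every limit satisfies (\ref{eq:5-19}) with minimum norm---is exactly the paper's proof, which is a two-line sketch referring back to Proposition~\ref{prop3-2}. You have filled in the details the paper omits, and you also go further than the paper by explicitly raising the uniqueness question for the minimum-norm element, a point the paper simply glosses over.

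Your proposed resolution of that uniqueness issue, however, has a genuine gap. You claim that ``any limit point $A^{*}$ produces the same state trajectory $x^{*}_t$'' because it is ``determined by the data in the limit''. But the vanishing residuals only give $x^{*}_{t+1}=A^{*}x^{*}_t$, so $x^{*}_t=(A^{*})^{t-1}x$ depends on the particular limit $A^{*}$; the observation constraint pins down $Cx^{*}_t=y_t$, not $x^{*}_t$ itself when $C$ is not injective. Different limit points can therefore yield different trajectories, and your reduction to a single affine constraint with a unique Frobenius projection does not go through. Concretely, take $n=2$, $C=(1\ 0)$, $x=e_1$, $T=3$, and data with $y_3\neq y_2^{2}$: the admissible set is $\{\bar A:\bar A_{11}=y_2,\ \bar A_{12}\bar A_{21}=y_3-y_2^{2}\}$, which is nonconvex, and the minimum-norm element is not unique (the two solutions are swapped by the symmetry $\bar A\mapsto \sigma\bar A\sigma$ with $\sigma=\mathrm{diag}(1,-1)$, which also leaves $J_\gamma$ invariant). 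In such a case the family of minimizers $A^{\gamma}$ cannot converge to a single matrix. The honest conclusion supported by the argument---yours and the paper's alike---is only that every subsequential limit of $A^{\gamma}$ lies in the set of minimum-norm solutions of~(\ref{eq:5-19}).
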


\begin{proof}
The proof is similar to that of Proposition \ref{prop3-2}. Necessarily

\[
\frac{1}{2}\text{tr}(A^{\gamma}(A^{\gamma})^{*})+\frac{\gamma}{2}\sum_{t=1}^{T-1}|x_{t+1}^{\gamma}-A^{\gamma}x_{t}^{\gamma}|^{2}+\dfrac{\gamma}{2}\sum_{t=2}^{T}|y_{t}-Cx_{t}^{\gamma}|^{2}\leq\frac{1}{2}\text{tr}(\bar{A}(\bar{A})^{*})
\]
Therefore the sequence $A^{\gamma}$ is bounded. Hence also the sequence
$x_{t}^{\gamma},t=2,\cdots T-1$ is bounded. If we consider a convergingnorm.
subsequence, the limit satisfies necessarily (\ref{eq:5-19}) and
has minimum. 
\end{proof}
We next consider the triple $A^{\gamma},x_{t}^{\gamma},p_{t}^{\gamma},\:t=1,\cdots T$
solution of (\ref{eq:5-17}), (\ref{eq:5-18}). We look for an asympotic
expansion of the form

\begin{equation}
x_{t}^{\gamma}=\bar{x}_{t}+\sum_{j=1}^{+\infty}\dfrac{x_{t}^{j}}{\gamma^{j}}\label{eq:5-20}
\end{equation}

\[
p_{t}^{\gamma}=p_{t}^{0}+\sum_{j=1}^{+\infty}\dfrac{p_{t}^{j}}{\gamma^{j}},\:A^{\gamma}=\bar{A}+\sum_{j=1}^{+\infty}\dfrac{A^{j}}{\gamma^{j}}
\]
After easy but tedious calculations, we obtain the sequence of systems
,$j\geq1$

\begin{equation}
x_{t+1}^{j}-\bar{A}x_{t}^{j}-A^{j}\bar{x}_{t}-\sum_{k=1}^{j-1}A^{k}x_{t}^{j-k}+p_{t+1}^{j-1}=0,\:t=1,\cdots T-1\label{eq:5-21}
\end{equation}
\[
p_{t}^{j-1}=(\bar{A})^{*}p_{t+1}^{j-1}+\sum_{k=1}^{j-1}(A^{k})^{*}p_{t+1}^{j-1-k}+C^{*}Cx_{t}^{j}
\]
\[
x_{1}^{j}=0,\;p_{T}^{j-1}=C^{*}Cx_{T}^{j}
\]
where the sum $\sum_{k=1}^{j-1}$ disappears for $j=1.$ We add the
relations

\begin{equation}
\bar{A}=-\sum_{t=1}^{T-1}p_{t+1}^{0}(\bar{x}_{t})^{*}\label{eq:5-22}
\end{equation}

\[
A^{j}=-\sum_{t=1}^{T-1}p_{t+1}^{j}(\bar{x}_{t})^{*}-\sum_{t=1}^{T-1}\sum_{k=0}^{j-1}p_{t+1}^{k}(x_{t}^{j-k})^{*}
\]
In the system (\ref{eq:5-21}) the unknowns are the pair $x_{t}^{j},p_{t}^{j-1},t=1,\cdots T.$
The matrices $A^{1},\cdots A^{j}$ are known, as well as the vectors
$x_{t}^{j-k},p_{t+1}^{j-1-k},$ for $k=1,\cdots j-1.$ The first equation
(\ref{eq:5-22}) is an equation for $A^{1}$ and the second equation
(\ref{eq:5-22}) is an equation for $A^{j+1}.$ These systems of equations
are linear in the unknowns, although very complicated. If they have
a solution then the expansion (\ref{eq:5-20}) is solution of (\ref{eq:5-17}),
(\ref{eq:5-18}). We shall focus on the first one, which is generic
for the following ones. Namely, we have to solve the system

\begin{equation}
x_{t+1}^{1}-\bar{A}x_{t}^{1}-A^{1}\bar{x}_{t}+p_{t+1}^{0}=0,\:t=1,\cdots T-1\label{eq:5-23}
\end{equation}

\[
p_{t}^{0}=(\bar{A})^{*}p_{t+1}^{0}+C^{*}Cx_{t}^{1}
\]
\[
x_{1}^{1}=0,\cdots,p_{T}^{0}=C^{*}Cx_{T}^{0}
\]
and

\begin{equation}
\bar{A}=-\sum_{t=1}^{T-1}p_{t+1}^{0}(\bar{x}_{t})^{*}\label{eq:5-24}
\end{equation}
As said earlier, in the system (\ref{eq:5-23}), the unknowns are
$x_{t}^{1}$ and $p_{t}^{0},$and $A^{1}$ is a parameter. We define
$A^{1}$ by solving the equation (\ref{eq:5-24}). We first decouple
the system of forward backward equations (\ref{eq:5-23}). We write

\begin{equation}
x_{t}^{1}=r_{t}^{1}-\Sigma_{t}p_{t}^{0}\label{eq:5-25}
\end{equation}
and standard calculations lead to

\begin{equation}
\Sigma_{t+1}=\bar{A}\left(\Sigma_{t}-\Sigma_{t}C^{*}(C\Sigma_{t}C^{*}+I)^{-1}C\Sigma_{t}\right)(\bar{A})^{*}+I\label{eq:5-26}
\end{equation}

\[
\Sigma_{1}=0
\]

\begin{equation}
r_{t+1}^{1}=\bar{A}\left(I-\Sigma_{t}C^{*}(C\Sigma_{t}C^{*}+I)^{-1}C\right)r_{t}^{1}+A_{1}\bar{x}_{t}\label{eq:5-27}
\end{equation}
\[
r_{1}^{1}=0
\]
then using (\ref{eq:5-25}) in the second equation (\ref{eq:5-23})
leads to the following backward recursion for $p_{t}^{0}$

\begin{equation}
p_{t}^{0}=\left(I-C^{*}(C\Sigma_{t}C^{*}+I)^{-1}C\Sigma_{t}\right)(\bar{A})^{*}p_{t+1}^{0}+C^{*}(C\Sigma_{t}C^{*}+I)^{-1}Cr_{t}^{1}\label{eq:5-28}
\end{equation}
\[
p_{T}^{0}=C^{*}(C\Sigma_{t}C^{*}+I)^{-1}Cr_{T}^{1}
\]
To simplify notation we define

\begin{equation}
\Gamma_{t}=\bar{A}\left(I-\Sigma_{t}C^{*}(C\Sigma_{t}C^{*}+I)^{-1}C\right)\label{eq:5-29}
\end{equation}
\[
\Lambda_{t}=C^{*}(C\Sigma_{t}C^{*}+I)^{-1}C
\]
then we get the system

\begin{equation}
r_{t+1}^{1}=\Gamma_{t}r_{t}^{1}+A_{1}\bar{x}_{t}\label{eq:5-30}
\end{equation}

\[
p_{t}^{0}=(\Gamma_{t})^{*}p_{t+1}^{0}+\Lambda_{t}r_{t}^{1}
\]
\[
r_{1}^{1}=0,\:p_{T}^{0}=\Lambda_{T}r_{T}^{1}
\]
If we use the notation

\begin{equation}
\Phi(t,s)=\Gamma_{t}\cdots\Gamma_{s},\:s=1,\cdots t\label{eq:5-31}
\end{equation}

\[
\Phi(t,t+1)=I
\]
then we obtain

\begin{equation}
r_{t+1}^{1}=\sum_{s=1}^{t}\Phi(t,s+1)A_{1}\bar{x}_{s}\label{eq:5-32}
\end{equation}

\begin{equation}
p_{t+1}^{0}=\sum_{s=t}^{T-1}\Phi^{*}(s,t+1)\Lambda_{s+1}r_{s+1}^{1}\label{eq:5-33}
\end{equation}
and we can write the equation for $A_{1}$

\begin{equation}
\bar{A}=-\sum_{t=1}^{T-1}\sum_{\sigma=1}^{T-1}\left(\sum_{s=\max(\sigma,t)}\Phi^{*}(s,t+1)\Lambda_{s+1}\Phi(s,\sigma+1)\right)A_{1}\bar{x}_{\sigma}(x_{t})^{*}\label{eq:5-34}
\end{equation}
We can apply this formula in the scalar case, with the notation $\bar{A}=\bar{a}$,
$A_{1}=a_{1},$$C=c$ and $T=3.$ We get $\Sigma_{1}=0,\:\Sigma_{2}=1,\:\Sigma_{3}=\dfrac{(\bar{a})^{2}+(1+c^{2})}{1+c^{2}}$
. Next $\Gamma_{1}=\bar{a},\;$$\Gamma_{2}=\dfrac{\bar{a}}{1+c^{2}},\:\Gamma_{3}=\dfrac{\bar{a}(1+c^{2})}{(1+c^{2})^{2}+c^{2}(\bar{a})^{2}}$.
We next have $\Lambda_{1}=c^{2},\:\Lambda_{2}=\dfrac{c^{2}}{1+c^{2}},\;\Lambda_{3}=\dfrac{c^{2}(1+c^{2})}{(1+c^{2})^{2}+c^{2}(\bar{a})^{2}}.$
Then equation (\ref{eq:5-34}) becomes

\begin{equation}
\bar{a}=-(\Lambda_{2}+\Lambda_{3}(\Gamma_{2}+\bar{a})^{2})a_{1}x^{2}\label{eq:5-35}
\end{equation}
which gives the value of $a_{1}.$

\section{CONCLUSION }

The concepts and methods of machine learning are most meaningful when
the system is already described by a state representation and the
state has a physical meaning. Otherwise, if the system is decribed
by an input-output linear map, it is probably better to look for the
minimum realization, which can be obtained by the Ho algorithm \cite{HOK}.
For purely deterministic systems as decsribed here, the best is probably
to try to obtain enough observation to be in the case (\ref{eq:4-3}),
and apply methods of full observation. But , in general, there is
a noise which affects the observation and we cannot reduce the problem
to the full observation case. In this situation, the methods described
above are perfectly applicable. It is clear that the penalty terms
play a considerable role, and must be tuned adequately.

\end{document}